\newtheorem{definition}{Definition}[section]
\newtheorem{theorem}{Theorem}[section]
\newtheorem{lemma}{Lemma}[section]
\newtheorem{remark}{Remark}[section]
\newtheorem{proposition}{Proposition}[section]
\newcommand{\RR}{\mathbb{R}}
\newcommand{\seq}[1]{\left<#1\right>}
\providecommand{\abs}[1]{\lvert#1\rvert}
\providecommand{\norm}[1]{\lVert#1\rVert}
\title{Blow-up for a non-linear stable non-Gaussian process in fractional time}
\date{}
\author[]{Soveny Sol{\'i}s and  Vicente Vergara}
\subjclass[2010]{Primary: 35B44, 35C15; Secondary: 35E05, 60J35}
\keywords{non-Gaussian process, blow-up of solutions, mild solutions, Volterra equations, subordination
principle.}
\address{S. Sol\'is: Departamento de Matem\'aticas, Facultad de Ciencias F\'isicas y Matem\'aticas\\
Universidad de Concepci\'on,  Concepci\'on, Chile.}
\email{ssolis@udec.cl}
\address{S. Sol\'is: Departamento de Matem\'aticas, Facultad de Ciencias Naturales y Matem\'aticas\\
Escuela Superior Polit\'ecnica del Litoral, Guayaquil, Ecuador.}
\email{ssolis@espol.edu.ec}
\address{V. Vergara: Departamento de Matem\'aticas, Facultad de Ciencias F\'isicas y Matem\'aticas\\
	Universidad de Concepci\'on,  Concepci\'on, Chile.}
\email{vvergaraa@udec.cl}
\thanks{The authors were partially supported by Chilean research grant Fondo Nacional de Desarrollo Científico y Tecnológico, FONDECYT 1190255.}
\thanks{Corresponding author: S. Sol\'is}
\begin{document}

\maketitle

\begin{abstract}
The behaviour of solutions for a non-linear diffusion problem is studied. A subordination principle is applied to obtain the variation of parameters formula in the sense of Volterra equations, which leads to the integral representation of a solution in terms of the fundamental solutions. This representation, the so-called mild solution, is used to investigate some properties about continuity and non-negativeness of solutions as well as to prove a Fujita type blow-up result. Fujita's critical exponent is established in terms of the parameters of the stable non-Gaussian process and a result for global solutions is given.
\end{abstract}

\section{Introduction}
\label{intro}

Let $\gamma>1$. We consider the following Cauchy problem 
\begin{equation}
\label{general}
\begin{split}
\partial^{\alpha}_t(u-u_0)(t,x)+\Psi_{\beta}(-i\nabla)u(t,x) & =\abs{u(t,x)}^{\gamma-1}u(t,x),\quad t>0, \; x\in\mathbb{R}^d,\\ 
u(t,x)|_{t=0} & = u_0(x)\geq 0,\quad x\in\mathbb{R}^d, 
\end{split}
\end{equation}
where $\partial^{\alpha}_t$ is the \textit{Riemann–Liouville fractional derivative} of order $\alpha\in(0,1)$ given by
\[
\partial^{\alpha}_t v=\dfrac{d}{dt}\displaystyle\int_0^t g_{1-\alpha}(t-s)v(s)ds=:\dfrac{d}{dt}(g_{1-\alpha}*v)(t),
\]
with $g_{\rho}(t):=\frac{1}{\Gamma(\rho)}t^{\rho-1}$ and the Euler Gamma function $\Gamma(\cdot)$. The notation $\partial^{\alpha}_t(u-u_0)(t,x)$ is understood as $\partial^{\alpha}_t u(t,x)-\partial^{\alpha}_t (1)\times u_0(x)$ and the function $u_0$ stands for the initial data in a certain Lebesgue space. The term $\Psi_{\beta}(-i\nabla)$ is a \textit{singular integral operator} of constant order $\beta\in(0,2)$ with symbol $\psi$, that is
\[
\Psi_{\beta}(-i\nabla) v(x)= \mathcal{F}^{-1}_{\xi\to x}[\psi(\xi) (\mathcal{F}v)(\xi)], \, v\in C_0^{\infty}(\RR^d),
\]
where $\mathcal{F}$ is the Fourier transform in $\RR^d$ and $\mathcal{F}^{-1}$ is its inverse. As usual, $C_0^{\infty}(\RR^d)$ denotes the space of test functions on $\RR^d$. The symbol $\psi$ is a measurable function on $\RR^d$ given by
\[
\psi(\xi)=\norm{\xi}^\beta \omega_\mu\left(\frac{\xi}{\norm{\xi}}\right),\quad \xi\in\mathbb{R}^d,
\]
where
\begin{equation*}
\omega_\mu(\theta):=\displaystyle\int_{S^{d-1}}\abs{\theta\cdot\eta}^\beta\mu(d\eta),\quad\theta\in S^{d-1},
\end{equation*}
with $\norm{\xi}=\sqrt{\xi_1^2+\cdots+\xi_d^2}$ being the standard Euclidean norm. Here, $\mu(d\eta)$ is a centrally symmetric finite (non-negative) Borel measure defined on the unitary sphere $S^{d-1}$, called \textit{spectral measure}, and $\omega_\mu(\cdot)$ is a continuous function on $S^{d-1}$, see e.g. \cite[Section 1.8]{Kol19}. Whenever $\mu(d\eta)=\varrho(\eta)d\eta$, where $\varrho$ is a continuous function on $S^{d-1}$, we will refer to $\varrho$ as the \textit{density of $\mu$}. Some restrictions on the function $\varrho$ may be required for the lower bound and behaviour of the fundamental solutions; see, e.g. \cite[Section 5.2]{KolV00}. More precisely, our basic hypothesis throughout the paper is the following:
\begin{itemize}
	\item[$(\mathcal{H}_1)$] The spectral measure $\mu$ has a strictly positive density, such that the function $\omega_\mu$ is strictly positive and $(d+1+[\beta])$-times continuously differentiable on $S^{d-1}$.
\end{itemize}
We denote by $(\mathcal{H}_2)$ to refer to $(\mathcal{H}_1)$ whenever we need to assume that $\omega_\mu$ is $(d+2+[\beta])$-times continuously differentiable on $S^{d-1}$, $[\beta]$ being the maximal integer not exceeding the real number $\beta$. The considerations just made above have been taken from \cite[Proposition 4.5.1]{Kol19} and \cite[Theorem 4.5.1]{Kol19}, for $d=1$ and $d>1$ respectively. We want to point out that the condition of strict positivity on $\omega_\mu$ in $\mathcal{H}_1$, guarantees that the support of the measure $\mu$ on $S^{d-1}$ is not contained in any hyperplane of $\RR^d$ (\cite[Section 4.5]{Kol19}).

In this work we are concerned with studying the blow-up phenomena for reaction-diffusion equations like \eqref{general} considering a non-regular class of solutions instead of the classical calls, with a temporal fractional derivative and a pseudo-differential operator related to a stochastic process.

Our time derivative is also called the \textit{Caputo fractional derivative}, for instance, in the sense of \cite[Section 2.4]{KST06} with $0<\alpha<1$. However, other authors may require smoothness conditions on the function to define the Caputo derivative (see, e.g. \cite[Sub-section 2.4.1]{Pod99}). Since $\psi$ is a Lévy-Khintchine symbol with index of stability $\beta$, it is well known that the corresponding stochastic process is called a localised Feller-Courrège process and therefore it makes sense to use the notation $\Psi_{\beta}(-i\nabla)$ for the associated generator (see, e.g. \cite[Chapter 6, Appendices C and D]{KolV00}).

As an important concept associated with $\alpha$, let us mention the \textit{mean squared displacement} ($MSD$) or the \textit{centred second moment}, which describes how fast is the dispersion of the particles in a random process. In \cite[Lemma 2.1]{KSVZ16}, the authors proved that the $MSD$ governed by the equation
\[
\partial^{\alpha}_t(u-u_0)-\Delta u=0
\]
specifically turns out to be $\frac{2d}{\Gamma(1+\alpha)}t^{\alpha}$, $t>0$, $0<\alpha<1$. In the literature one traditionally finds that anomalous diffusion refers to this power-law. See, e.g. \cite{SW89}, \cite{MMPG07}, \cite{Luc14}, \cite{AM22} and references therein. However, in  our case, the Cauchy problem \eqref{general} do not possess a finite $MSD$. This can be directly checked by using the definition of $MSD$ (\cite[expression (6)]{KSVZ16}) and similar arguments as in the proof of \cite[Lemma 2.1]{KSVZ16} or \cite[Theorem 2.8]{SV22}.  

In this setting, the Green function of the Cauchy problem $\dfrac{\partial u}{\partial t}+\Psi_{\beta}(-i\nabla) u=0$ is non-Gaussian and it is interpreted as the transition probability density of the corresponding \textit{stable non-Gaussian process} \cite[Chapter 7]{Kol11}. The study of these processes and their generalizations is motivated by the increasing use in the mathematical modeling of processes in engineering, natural sciences and economics. See, e.g. \cite{ASMC21}, \cite{AM22}, \cite{MK00} and \cite[Chapter 1]{Zol86}. Similar to the case of Gaussian processes, which have been widely studied (see, e.g. \cite{Aro67}, \cite{Fri83}, \cite{HKNS06}, \cite{Bei11}, \cite{IKO02}), it arises an interest in qualitative properties, blow-up and asymptotic behaviour for the solutions of non-Gaussian ones. For instance, in the case $\beta=2$ and $\omega_\mu\equiv 1$ we see that the operator, namely $\Psi_2(-i\nabla)$, becomes the negative Laplacian $(-\Delta)$ with symbol $\psi(\xi)=\norm{\xi}^2$. The blow-up of the solution to the corresponding ordinary differential equation
\begin{equation*}
\begin{split}
\partial_t\;u(t,x)+(-\Delta)u(t,x) & = u(t,x)^{\gamma},\quad t>0, \; x\in\mathbb{R}^d,\\ 
u(t,x)|_{t=0} & = u_0(x)\geq 0,\quad x\in\mathbb{R}^d, 
\end{split}
\end{equation*}
was investigated by Fujita in 1966 (\cite{Fuj66}). Since then, many other researchers have explored blow-up phenomena (see, e.g. \cite{KLT05}, \cite{LW18}, \cite{NZZG18}, \cite{QP07}, \cite{VZ17}, \cite{ZS15}). An interesting generalization that includes a Riemann-Liouville fractional integral in the non-linear term, on the right hand side, can be found in \cite{LZ18}. Following the analysis of this phenomenon, in this work we show that the non-linearity of \eqref{general} leads to the blow-up of positive solutions in a finite time. 

For this purpose, we say that a function $u:[0,T)\times\RR^d\rightarrow\RR$ \textit{blows-up} at the finite time $T$ if 
\[
\lim_{t\rightarrow T^-}\norm{u(t)}_\infty =+\infty,
\]
and thus our main result is stated as follows. 
\begin{theorem}
\label{blowUp}
Let $\alpha\in(0,1)$ and $\beta\in (0,2)$. Assume the hypothesis $(\mathcal{H}_1)$ holds. Suppose that $\alpha=\frac{\beta}{2}$, that $1<p<\infty$ and that $u_0\in L_p(\RR^d)\cap C(\RR^d)$ is a non-negative function. If $1<\gamma <1+ \frac{\beta}{d}$, then all non-trivial non-negative solutions of \eqref{general} that admit the representation \eqref{integral} can only be local. If $\gamma=1+ \frac{\beta}{d}$, then the non-trivial non-negative solutions can only be local whenever the initial condition is sufficiently large. Moreover, if additionally $u_0\in L_\infty(\RR^d)$, then any positive mild solution of \eqref{general} blows-up in finite time.
\end{theorem}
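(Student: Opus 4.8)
The natural route is a Duhamel--bootstrap argument at the level of the representation \eqref{integral}, arguing by contradiction. Write \eqref{integral} as $u(t)=S_\alpha(t)u_0+\int_0^t P_\alpha(t-\sigma)\abs{u(\sigma)}^{\gamma-1}u(\sigma)\,d\sigma$, where $S_\alpha(t)$, $P_\alpha(t)$ are convolutions with the two fundamental solutions $Z_t,Y_t$ produced by the subordination principle. Both are positivity preserving, so a non-negative solution satisfies $u(t)\ge S_\alpha(t)u_0$ and $u(t)\ge\int_0^t P_\alpha(t-\sigma)u(\sigma)^\gamma\,d\sigma$. Because $\alpha=\tfrac\beta2$, the linear problem is invariant under $x\mapsto\lambda x$, $t\mapsto\lambda^2t$, which together with the mass identities forces the self-similar forms $Z_t(x)=t^{-d/2}Z_1(t^{-1/2}x)$, $Y_t(x)=t^{\alpha-1-d/2}Y_1(t^{-1/2}x)$, with $\int Z_t=1$ and $\int Y_t=g_\alpha(t)$; under $(\mathcal H_1)$ the profiles $Z_1,Y_1$ are continuous and strictly positive (being, up to normalisation, transition densities of the non-degenerate stable non-Gaussian process), hence bounded below by a positive constant on each ball. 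Since $u_0$ is continuous, non-negative and non-trivial, there are $x^*$ and $\delta>0$ with $u_0\ge\delta$ on $\norm{x-x^*}\le\delta$; feeding this into $S_\alpha(t)u_0$ and using the scaling yields a first lower bound $u(t,x)\ge c_0\,t^{-d/2}$ on $\{\norm{x-x^*}\le\tfrac12\sqrt t,\ t\ge t_1\}$ for some fixed $t_1$, with $c_0$ a positive multiple of the mass $\delta^{d+1}$ of $u_0$ near $x^*$.

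Now the iteration. Assume $u(\sigma,y)\ge c_k\,\sigma^{a_k}$ on $\norm{y-x^*}\le\tfrac12\sqrt\sigma$ for $\sigma\ge t_1$. For $t\ge 2t_1$, $\norm{x-x^*}\le\tfrac12\sqrt t$ and $\sigma\in(t_1,t/2)$ one has $\norm{x-y}\asymp\sqrt t\asymp\sqrt{t-\sigma}$ whenever $\norm{y-x^*}\le\tfrac12\sqrt\sigma$, so the profile bound gives $Y_{t-\sigma}(x-y)\gtrsim t^{\alpha-1-d/2}$ on that region; inserting the inductive bound, integrating over the ball (volume $\asymp\sigma^{d/2}$) and over $\sigma\in(t_1,t/2)$, and dropping the lower endpoint, produces $u(t,x)\gtrsim t^{\alpha-1-d/2}c_k^\gamma\int_{t_1}^{t/2}\sigma^{\gamma a_k+d/2}\,d\sigma\gtrsim A\,c_k^\gamma\,t^{\gamma a_k+\alpha}$ with an absolute constant $A>0$ (valid since $\gamma a_k+\tfrac d2>-1$ holds along the iteration, using $d(\gamma-1)<\beta<2$). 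Hence $a_{k+1}=\gamma a_k+\alpha$ and $c_{k+1}\ge A\,c_k^\gamma$ on the same set, with a \emph{fixed} $t_1$. The $a_k$-recursion has fixed point $a_*=\tfrac{\alpha}{1-\gamma}$ and $a_{k+1}-a_*=\gamma(a_k-a_*)$, so with $a_0=-\tfrac d2$ we get $a_k\to+\infty$ precisely when $a_0>a_*$, which (using $\alpha=\tfrac\beta2$, $\gamma>1$) is equivalent to $\gamma-1<\tfrac\beta d$. In that case $a_k-a_*=\kappa\gamma^k$ with $\kappa=a_0-a_*>0$, while $\ln c_{k+1}\ge\ln A+\gamma\ln c_k$ gives $\ln c_k\ge -C\gamma^k$, so $\ln\!\bigl(c_k\,t^{a_k}\bigr)\ge a_*\ln t+(\kappa\ln t-C)\gamma^k\to+\infty$ for any fixed $t$ with $\ln t>C/\kappa$. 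Thus there is a finite $T_*\ge t_1$ with $u(t,x^*)\ge\lim_k c_k t^{a_k}=+\infty$ for every $t>T_*$, contradicting the finiteness of the solution. Hence for $1<\gamma<1+\tfrac\beta d$ every non-trivial non-negative solution admitting \eqref{integral} is local.

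In the critical case $\gamma=1+\tfrac\beta d$ one has $a_*=\tfrac{\alpha}{1-\gamma}=-\tfrac d2=a_0$, so the exponent stalls, $a_k\equiv-\tfrac d2$, and only the constants propagate: $c_{k+1}\ge A\,c_k^\gamma$ on $\{\norm{x-x^*}\le\tfrac12\sqrt t,\ t\ge t_1\}$. This sequence diverges once $c_0$ exceeds the threshold $A^{-1/(\gamma-1)}$; since $c_0$ is bounded below by a positive multiple of the mass of $u_0$ on $B(x^*,\delta)$, a sufficiently large initial datum forces $c_k\to\infty$ and hence $u(t,x^*)=+\infty$ for $t\ge t_1$ fixed --- again a contradiction, so such solutions can only be local. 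For the last assertion, if in addition $u_0\in L_\infty(\RR^d)$ then a positive mild solution $u$ on its maximal interval $[0,T)$ obeys $\norm{u(t)}_\infty\le\norm{u_0}_\infty+\frac1{\Gamma(\alpha)}\int_0^t(t-s)^{\alpha-1}\norm{u(s)}_\infty^\gamma\,ds$, because $\norm{S_\alpha(t)}_{L_\infty\to L_\infty}=1$ and $\norm{P_\alpha(\tau)}_{L_\infty\to L_\infty}=g_\alpha(\tau)$, the latter integrable near $0$; hence the mild map is a contraction on a short interval around any $L_\infty$ datum, so $T<\infty$ together with $\limsup_{t\to T^-}\norm{u(t)}_\infty<\infty$ would allow continuation past $T$, contradicting maximality. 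A Volterra--Gronwall argument on the above inequality then upgrades this to $\lim_{t\to T^-}\norm{u(t)}_\infty=+\infty$, and combining with the non-globalness already proved gives blow-up in finite time.

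The main obstacles are two. First, the whole bootstrap rests on the sharp pointwise behaviour of $Z_t$ and $Y_t$: the strict positivity and the exact self-similarity that $(\mathcal H_1)$ and the balance $\alpha=\tfrac\beta2$ provide (two-sided estimates of Kolokoltsov type); without them one cannot lower-bound the Duhamel kernel on balls of radius $\asymp\sqrt t$. Second, the critical exponent $\gamma=1+\tfrac\beta d$ is genuinely borderline --- the iteration stops gaining powers of $t$ --- so one must carefully track the multiplicative constants $c_k$ and tie $c_0$ to the size of $u_0$, which is exactly where the ``sufficiently large initial datum'' hypothesis becomes unavoidable with this method. The technical heart is the bookkeeping that keeps every lower bound valid on the single region $\{\norm{x-x^*}\le\tfrac12\sqrt t,\ t\ge t_1\}$ with $t_1$ independent of $k$, rather than on regions shrinking with time.
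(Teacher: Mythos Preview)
Your route --- a direct pointwise bootstrap of lower bounds $u(t,x)\ge c_k\,t^{a_k}$ on parabolic regions --- is genuinely different from the paper's, but as written it contains a gap in the subcritical case which also infects the critical one. The problematic claim is that the recursion constant $A$ in $c_{k+1}\ge A\,c_k^\gamma$ is \emph{absolute}. It is not: integrating $\int_{t_1}^{t/2}\sigma^{\gamma a_k+d/2}\,d\sigma$ and writing the result as a multiple of $t^{\gamma a_k+d/2+1}$ forces a factor of order $2^{-(\gamma a_k+d/2+1)}/(\gamma a_k+d/2+1)$. Since $a_k-a_*=\kappa\gamma^k\to\infty$, this gives $\ln A_k\sim-\kappa\gamma^{k+1}\ln 2$, and iterating yields $\ln c_k\sim -C\,k\,\gamma^k$ rather than $-C\gamma^k$. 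Consequently, for any \emph{fixed} $t$ one has $\ln(c_k t^{a_k})\sim(\kappa\ln t-Ck)\gamma^k\to-\infty$: the lower bounds degenerate instead of blowing up. The companion claim that everything lives on a fixed set $\{t\ge t_1\}$ is also unjustified --- each step needs $t$ at least twice the previous threshold to make the $\sigma$-integral nonempty --- so the $k$-th bound holds only on $\{t\ge 2^k t_1\}$, and you cannot evaluate the whole sequence at one time. In the critical case $a_k\equiv -d/2$ the first difficulty disappears (the exponent is frozen, so $A$ really is absolute), but the expanding-threshold problem persists: for fixed $t$ only finitely many $c_k$ apply, and no contradiction follows.

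The paper avoids both issues by the classical Kaplan--Fujita device. Rather than iterate pointwise, it tests $u$ against the heat kernel, setting $F(t)=\int_{\RR^d}H(t,x)u(t,x)\,dx$. The Gaussian lower bounds $Z(t,x)\gtrsim t^{-d/2}e^{-\norm{x}^2/(4t)}$ and $Y(t,x)\gtrsim t^{\alpha-1-d/2}e^{-\norm{x}^2/(4t)}$ (which, exactly as you note, rely on $\alpha=\beta/2$) combine with Jensen's inequality and the semigroup identity for $H$ to produce a single closed inequality
\[
t^{\,1-\alpha+\frac{d}{2}\gamma}\,F(t)\ \ge\ C_3\,t^{\,1-\alpha+\frac{d}{2}(\gamma-1)}\ +\ C_5\int_0^t s^{\frac{d}{2}\gamma}F(s)^\gamma\,ds\,;
\]
calling the right side $f(t)$ and differentiating gives $f'(t)f(t)^{-\gamma}\ge C_5\,t^{-\frac{d}{2}\gamma(\gamma-1)-(1-\alpha)\gamma}$, which is integrated once over $[t,T]$ and compared with the a priori upper bound for $f^{1-\gamma}$ coming from the linear term. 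This \emph{continuous} argument sidesteps the compounding-constant problem entirely. In the critical case the paper first performs one explicit Duhamel pass to gain a factor $\log(2-2/t)$ in the lower bound for $u$, then reruns the same $f$-argument; the largeness of $u_0$ enters through the resulting constant. Your continuation/blow-up argument for the final clause is essentially the same as the paper's and is fine.
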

Since the literature on blow-up theorems of Fujita type is quite extensive, we do not attempt to review it in this paper. Nevertheless, let us emphasize that the relation $\alpha=\frac{\beta}{2}$ pays a crucial role in the proof of Theorem \ref{blowUp}, which makes a similarity with what Fujita (1928-) found in 1966 for the case $\alpha=1$ working in the Gaussian framework when $\beta=2$ and $\omega_\mu\equiv 1$. For our proof, we exploit the representation of the Volterra equations in the sense of Prüss as well as the theory of \textit{completely positive kernels} of type ($\mathcal{PC}$), like $g_\rho$ with $\rho\in (0,1)$ (see, e.g. \cite{PV18}). We also deal with the theory of pseudo-differential operators that generate a sub-Markovian semigroup on $L_p(\RR^d)$, under the condition that the symbol $\psi:\RR^d\rightarrow\mathbb{C}$ is a continuous and negative definite function; see \cite[Examples 4.1.12 and 4.1.13]{Jac01}. In our case, $\psi$ satisfies such condition (\cite[Formula 1.9]{Kol00}, \cite[Theorem 3.6.11 and Lemma 3.6.8]{Jac01}).

Other results in \cite[Section 2]{JK19} and \cite[Section 8.2]{Kol19}, are also particularly important for our work. Here, the authors show that the linear Cauchy problem
\begin{equation*}
	\begin{split}
		\partial^{\alpha}_t(u-u_0)(t,x)+\Psi_{\beta}(-i\nabla)u(t,x)&=f(t,x),\quad t>0, \; x\in\mathbb{R}^d,\\ u(t,x)|_{t=0}&=u_0(x),\quad x\in\mathbb{R}^d, 
	\end{split}
\end{equation*}
admits a pair of fundamental solutions $(Z,Y)$, given by
\begin{equation}\label{Z:G}
	Z(t,x):=\dfrac{1}{\alpha}\displaystyle\int_0^\infty G(t^\alpha s,x)s^{-1-\frac{1}{\alpha}}G_\alpha(1,s^{-\frac{1}{\alpha}})ds
\end{equation}
and
\begin{equation}\label{Y:G}
	Y(t,x):=\displaystyle\int_0^\infty t^{\alpha-1}G(t^\alpha s,x)s^{-\frac{1}{\alpha}}G_\alpha(1,s^{-\frac{1}{\alpha}})ds,
\end{equation}
where $G$ stands for the Green function that solves the problem
\begin{equation}\label{HomogeneaOrden1}
\begin{split}
\partial_t\, G(t,x)+\Psi_{\beta}(-i\nabla)G(t,x)&=0,\quad t>0, \; x\in\mathbb{R}^d,\\ 
G(t,x)|_{t=0}&=\delta_0(x),\quad x\in\mathbb{R}^d,
\end{split}
\end{equation}
$\delta_0$ being the Dirac delta distribution, and $G_\alpha(\cdot,\cdot)$ is the Green function that solves the problem 
\[
\partial_t\, v(t,s)+\dfrac{d^\alpha}{ds^\alpha}v(t,s)=0,\quad t>0,\;s\in\RR, \; G_\alpha(0,s)=\delta(s), 
\]
where $\alpha\in (0,1)$ and 
\[\dfrac{d^\alpha}{ds^\alpha}f(s):=\dfrac{1}{\Gamma(-\alpha)}\int_0^\infty \dfrac{f(s-\tau)-f(s)}{\tau^{1+\alpha}}d\tau,
\] 
see \cite[Formulas (1.111) and (2.74)]{Kol19}. From \cite[Lemma 2.15]{SV22}, we know that the fundamental solutions $(Z,Y)$ given by \eqref{Z:G}-\eqref{Y:G} satisfy the relation
\begin{equation}
\label{relacionYZ}
Y(\cdot,x)=\dfrac{d}{dt}(g_{\alpha}*Z(\cdot,x)),\quad t>0,\quad x\in\RR^d\setminus\{0\},
\end{equation}
which is crucial for the integral representation \eqref{integral} below.

This paper is organized as follows. In Section \ref{sec:1} we have compiled some properties of the fundamental solutions $Z,Y$. In Section \ref{sec:2} we derive the integral representation of a solution to \eqref{general} in the sense of Definition \ref{defSolution}. The main result of this section is given by Theorem \ref{localrepresentation}. In Section \ref{sec:3} we prove two results on continuity and non-negativeness of the local solutions, given by Theorems \ref{continuidad} and \ref{positividad} respectively. The last section is devoted to prove our main result related to the blow-up of positive solutions, stated in Theorem \ref{blowUp}. We also give a Fujita type result for global solutions in Theorem \ref{SolucionGlobal}.

\section{Preliminaries}
\label{sec:1}
In what follows we use the notations $f\asymp g$ and $f\lesssim g$ in $D$, which means that there exists constants  $C,C_1,C_2>0$ such that $C_1 g\leq f\leq C_2 g$ and $f\leq C g$ in $D$, respectively. Such constants may change line by line. We also use the notation $\Omega=\norm{x}^\beta t^{-\alpha}$ for $x\in \RR^d$ and $t>0$.

\medbreak

The following result summarizes the two-sided estimates for $Z$. For its proof see  \cite[Theorem 2]{JK19}. 

\begin{proposition}
	\label{cotasZ}
	Let $\alpha\in (0,1)$ and $\beta\in (0,2)$. Assume the hypothesis $(\mathcal{H}_1)$ holds. Then there exists a positive constant $C$ such that for $(t,x)\in(0,\infty)\times\mathbb{R}^d$ the following two-sided estimates for $Z$ hold. For $\Omega\leq 1$,
		\begin{align*}
		Z(t,x)\asymp C t^{-\frac{\alpha d}{\beta}}~~~~~~~~~~~~~~~&\quad\text{if}\quad d<\beta,\\
		Z(t,x)\asymp C t^{-\alpha}(|\log(\Omega)|+1) &\quad\text{if}\quad d=\beta,\\
		Z(t,x)\asymp C t^{-\frac{\alpha d}{\beta}}\Omega^{1-\frac{d}{\beta}}~~~~~~~~ &\quad\text{if}\quad d>\beta.
		\end{align*}
		
		For $\Omega\geq 1$,
		\begin{equation*}
		Z(t,x)\asymp C t^{-\frac{\alpha d}{\beta}}\Omega^{-1-\frac{d}{\beta}}.
		\end{equation*}
\end{proposition}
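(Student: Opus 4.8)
The plan is to insert sharp two-sided bounds for the non-Gaussian stable kernel $G$ into the subordination formula \eqref{Z:G} and then estimate the resulting integral directly. First I would record the heat-kernel bounds for $G$ itself: under $(\mathcal{H}_1)$, the cited results \cite[Proposition 4.5.1, Theorem 4.5.1]{Kol19} give
\[
G(\tau,x)\asymp \tau^{-\frac{d}{\beta}}\wedge\, \tau\norm{x}^{-(d+\beta)},\qquad (\tau,x)\in(0,\infty)\times\RR^d,
\]
so that $G(\tau,x)\asymp \tau^{-d/\beta}$ on the near region $\tau\geq\norm{x}^\beta$ and $G(\tau,x)\asymp \tau\norm{x}^{-(d+\beta)}$ on the far region $\tau\leq\norm{x}^\beta$. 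Second, I would record the classical asymptotics of the one-sided $\alpha$-stable density $r\mapsto G_\alpha(1,r)$: a power-law tail $G_\alpha(1,r)\asymp r^{-1-\alpha}$ as $r\to\infty$ and super-exponential (stretched-exponential) decay as $r\to 0^+$; both are standard and available from \cite[Formulas (1.111), (2.74)]{Kol19}.

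The key reduction is a change of variables $\tau=t^\alpha s$ in \eqref{Z:G}, which turns the self-similar weight into
\[
Z(t,x)=\frac{t}{\alpha}\int_0^\infty G(\tau,x)\,\tau^{-1-\frac{1}{\alpha}}G_\alpha\!\left(1,t\tau^{-\frac{1}{\alpha}}\right)d\tau.
\]
Feeding the two asymptotics of $G_\alpha$ into the weight $\tau^{-1-1/\alpha}G_\alpha(1,t\tau^{-1/\alpha})$ shows that it is $\asymp t^{-1-\alpha}$ on the range $\tau\lesssim t^\alpha$ (there the argument $t\tau^{-1/\alpha}$ is large, and the power-law tail makes the $\tau$-powers cancel exactly) and is super-exponentially small on $\tau\gtrsim t^\alpha$. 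This yields the clean two-sided comparison
\[
Z(t,x)\asymp t^{-\alpha}\int_0^{t^\alpha}G(\tau,x)\,d\tau,
\]
which is the heart of the argument and reduces everything to integrating the stable kernel in time.

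It then remains to evaluate $\int_0^{t^\alpha}G(\tau,x)\,d\tau$ using the bounds for $G$, splitting the $\tau$-integral at the $G$-crossover $\tau=\norm{x}^\beta=\Omega t^\alpha$. When $\Omega\leq 1$ this crossover lies inside $(0,t^\alpha)$: the far-region piece contributes $\asymp\norm{x}^{\beta-d}$ and the near-region piece $\int_{\norm{x}^\beta}^{t^\alpha}\tau^{-d/\beta}d\tau$ produces, after comparing the two endpoints, the factor $t^{\alpha-\alpha d/\beta}$ when $d<\beta$, the logarithm $\abs{\log\Omega}+1$ when $d=\beta$, and $\Omega^{1-d/\beta}t^{\alpha-\alpha d/\beta}$ when $d>\beta$; multiplying by $t^{-\alpha}$ reproduces the three stated estimates. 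When $\Omega\geq 1$ the whole range $\tau\in(0,t^\alpha)$ lies in the far region, so $\int_0^{t^\alpha}\tau\norm{x}^{-(d+\beta)}d\tau\asymp t^{2\alpha}\norm{x}^{-(d+\beta)}$, and multiplication by $t^{-\alpha}$ rewrites as $t^{-\alpha d/\beta}\Omega^{-1-d/\beta}$.

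I expect the main obstacle to be the rigorous justification of the reduction $Z(t,x)\asymp t^{-\alpha}\int_0^{t^\alpha}G\,d\tau$, rather than the case analysis, which is then elementary. For the upper bound one must control the tail $\int_{t^\alpha}^\infty G(\tau,x)\tau^{-1-1/\alpha}G_\alpha(1,t\tau^{-1/\alpha})\,d\tau$ and verify that the super-exponential decay of $G_\alpha$ dominates the at-most-polynomial growth coming from $G(\tau,x)\leq C\tau^{-d/\beta}$; for the lower bound one needs a uniform positive lower bound on the weight on a fixed proportion of $(0,t^\alpha)$, which again follows from the power-law tail of $G_\alpha$ but must be made uniform in $t$ through the self-similarity. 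A secondary technical point is to check that the endpoint comparisons in the cases $d\neq\beta$ are uniform in $\Omega\in(0,1]$, so that the implied constants do not degenerate as $\Omega\to 0$ or $\Omega\to 1$, and to treat the crossover regions $\tau\asymp t^\alpha$ and $\tau\asymp\norm{x}^\beta$ where the two-sided bounds switch branches.
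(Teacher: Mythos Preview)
Your proposal is correct and follows essentially the same route as the paper, which does not prove this proposition but refers to \cite[Theorem~2]{JK19}; the method there --- visible in the paper's proof of the companion Proposition~\ref{cotasY} --- inserts the sharp two-sided bounds $G(t^\alpha s,x)\asymp \min\big(t^{-\alpha d/\beta}\Omega^{-1-d/\beta}s,\,t^{-\alpha d/\beta}s^{-d/\beta}\big)$ and the small/large asymptotics of $G_\alpha(1,\cdot)$ into \eqref{Z:G}, splits the $s$-integral at $s=1$ into pieces $I_1,I_2$, and evaluates each. Your packaging via the reduction $Z(t,x)\asymp t^{-\alpha}\int_0^{t^\alpha}G(\tau,x)\,d\tau$ is a slightly cleaner reformulation of the same computation (it amounts to showing $I_2\lesssim I_1$ up front rather than computing both), and the obstacle you flag --- controlling the tail $\tau\gtrsim t^\alpha$ uniformly --- is exactly what the explicit evaluation of $I_2$ in the cited proof handles.
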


In the same way we have derived the two-sided estimates for $Y$.

\begin{proposition}
	\label{cotasY}
	Under the same assumptions as Proposition \ref{cotasZ}, the following two-sided estimates for $Y$ hold.
	For $\Omega\leq 1$,
		\begin{align*}
		Y(t,x)\asymp C t^{-\frac{\alpha d}{\beta}+\alpha-1}~~~~~~~~~~~~\,&\quad \text{if}\quad d<2\beta,\\
		Y(t,x)\asymp C t^{-\alpha-1}(|\log(\Omega)|+1)~ &\quad \text{if}\quad d=2\beta, \\
		Y(t,x)\asymp C t^{-\frac{\alpha d}{\beta}+\alpha-1}\Omega^{2-\frac{d}{\beta}}~~~~~~&\quad \text{if}\quad d>2\beta.
		\end{align*}
		
		For $\Omega\geq 1$,
		\begin{equation*}
		Y(t,x)\asymp C t^{-\frac{\alpha d}{\beta}+\alpha-1}\Omega^{-1-\frac{d}{\beta}}.
		\end{equation*}
\end{proposition}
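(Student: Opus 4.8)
The plan is to argue exactly as in the proof of Proposition~\ref{cotasZ} (i.e.\ \cite[Theorem 2]{JK19}), feeding the subordination formula \eqref{Y:G} with two standard ingredients available under $(\mathcal H_1)$. The first is the two-sided estimate for the order-one Green function $G$ of the stable non-Gaussian process,
\[
G(\tau,x)\asymp \tau^{-\frac{d}{\beta}}\bigl(1+\norm{x}\,\tau^{-\frac1\beta}\bigr)^{-d-\beta},\qquad \tau>0,\ x\in\RR^d,
\]
from \cite{Kol19}; the second is the asymptotics of the one-sided stable density $G_\alpha(1,\cdot)$, whose tails are known: a power law $G_\alpha(1,y)\asymp y^{-1-\alpha}$ as $y\to\infty$ and stretched-exponential decay as $y\to0^+$. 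The crucial observation is that \eqref{Z:G} and \eqref{Y:G} differ only in that the weight $s^{-1-\frac1\alpha}$ of $Z$ is replaced by $s^{-\frac1\alpha}$ for $Y$ (together with the prefactor $t^{\alpha-1}$); that is, the $Y$-integrand carries one extra factor of $s$. This single change is precisely what shifts the critical dimension from $\beta$ to $2\beta$ and promotes $\Omega^{1-\frac d\beta}$ to $\Omega^{2-\frac d\beta}$.

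Concretely, I would first insert the estimate for $G(t^\alpha s,x)$ into \eqref{Y:G}. Writing $\norm{x}(t^\alpha s)^{-1/\beta}=(\Omega/s)^{1/\beta}$ with $\Omega=\norm{x}^\beta t^{-\alpha}$, the temporal variable factors out cleanly and one obtains
\[
Y(t,x)\asymp t^{-\frac{\alpha d}{\beta}+\alpha-1}\,I(\Omega),\qquad I(\Omega):=\int_0^\infty s^{-\frac d\beta-\frac1\alpha}\bigl(1+(\Omega/s)^{1/\beta}\bigr)^{-d-\beta}G_\alpha(1,s^{-\frac1\alpha})\,ds .
\]
The prefactor already matches the claimed exponent $-\frac{\alpha d}{\beta}+\alpha-1$, so the whole statement reduces to the two-sided estimation of the scalar quantity $I(\Omega)$ as $\Omega\to0^+$ and $\Omega\to\infty$. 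The substitution $u=s^{-1/\alpha}$ turns this into
\[
I(\Omega)\asymp\int_0^\infty u^{\alpha(\frac d\beta-1)}\bigl(1+\Omega^{1/\beta}u^{\alpha/\beta}\bigr)^{-d-\beta}G_\alpha(1,u)\,du,
\]
whose integrand, using the tail $G_\alpha(1,u)\asymp u^{-1-\alpha}$, behaves like $u^{\frac{\alpha d}{\beta}-2\alpha-1}$ as $u\to\infty$, while near $u=0$ the stretched-exponential decay of $G_\alpha$ kills any contribution.

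The estimation of $I(\Omega)$ then splits along the transition $\Omega^{1/\beta}u^{\alpha/\beta}\approx1$, i.e.\ $u\approx\Omega^{-1/\alpha}$. For $\Omega\le1$ one bounds the factor $(1+\Omega^{1/\beta}u^{\alpha/\beta})^{-d-\beta}$ by $1$ for $u\lesssim\Omega^{-1/\alpha}$ and by $(\Omega^{1/\beta}u^{\alpha/\beta})^{-d-\beta}$ beyond: large-$u$ integrability of $u^{\frac{\alpha d}{\beta}-2\alpha-1}$ holds exactly when $d<2\beta$, giving the constant regime; the borderline $d=2\beta$ makes the integrand $\asymp u^{-1}$ and the cut-off at $u\approx\Omega^{-1/\alpha}$ produces the logarithmic term $\abs{\log\Omega}+1$; and for $d>2\beta$ both sides of the transition contribute the common order $\Omega^{2-\frac d\beta}$. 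For $\Omega\ge1$ the mass of $I(\Omega)$ sits where $G_\alpha$ is not exponentially small, there $\Omega^{1/\beta}u^{\alpha/\beta}\gg1$, and one factors out $\Omega^{-(d+\beta)/\beta}=\Omega^{-1-\frac d\beta}$ times a convergent fractional moment $\int_0^\infty u^{-2\alpha}G_\alpha(1,u)\,du$, yielding the far-field order, in agreement with the far-field regime of Proposition~\ref{cotasZ}.

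The main obstacle I anticipate is securing the lower bounds with the same sharpness as the upper ones, uniformly across regions: one must restrict each integral to the $u$-subinterval on which both the Green-function factor and $G_\alpha(1,u)$ are simultaneously comparable to their asserted orders, and then verify that this restricted piece already realizes the claimed order. The borderline case $d=2\beta$ is the most delicate, since the constant and the $\abs{\log\Omega}$ contributions must be balanced against each other. As a reserve route I would keep the identity \eqref{relacionYZ}, $Y=\frac{d}{dt}(g_\alpha*Z)$, combined with the self-similar form $Z(\tau,x)=\tau^{-\alpha d/\beta}\Phi(\norm{x}\tau^{-\alpha/\beta})$ dictated by the scaling behind Proposition~\ref{cotasZ}; differentiating the convolution reproduces the correct profile but requires controlling the singular kernel $g_\alpha$ near $t$, which is why I expect the direct subordination computation above to be the cleaner path.
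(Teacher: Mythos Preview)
Your proposal is correct and follows essentially the same route as the paper: both feed the subordination formula \eqref{Y:G} with the known two-sided bounds for $G$ and the tail asymptotics of $G_\alpha$, observe that the only change relative to $Z$ is the extra factor of $s$ (plus $t^{\alpha-1}$), and then estimate the resulting scalar integral case by case, which is exactly what produces the shift of the critical dimension from $\beta$ to $2\beta$. The only cosmetic differences are that the paper stays in the $s$-variable and uses the $\min$ form $G(t^\alpha s,x)\asymp\min\bigl(t^{-\alpha d/\beta}\Omega^{-1-d/\beta}s,\,t^{-\alpha d/\beta}s^{-d/\beta}\bigr)$ as in \cite[Theorem 2]{JK19}, splitting the integral at $s=1$ and $s=\Omega$, whereas you substitute $u=s^{-1/\alpha}$ and work with the equivalent $(1+\cdot)^{-d-\beta}$ profile.
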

\begin{proof}
	The assertions follow from straightforward computations made in the proof of the previous estimates for $Z$, in \cite[Theorem 2]{JK19}. There, the authors used the fact that the asymptotic behaviour of $G_\alpha$ is the same as for the density $w_\alpha$ given in \cite[Proposition 1]{JK19} (with the skewness of the distribution that equals to $0$) by
\[
		w_\alpha(\tau) \sim C
		\begin{cases}
			\tau^{-1-\alpha} &\quad \text{as}\quad \tau\rightarrow\infty,\\
			f_\alpha(\tau):=\tau^{-\frac{2-\alpha}{2(1-\alpha)}}e^{-c_\alpha\tau^{-\frac{\alpha}{1-\alpha}}} &\quad \text{as}\quad \tau\rightarrow 0,\end{cases}		
\]	
where $c_\alpha=(1-\alpha)\alpha^{\frac{\alpha}{1-\alpha}}$. See e.g., \cite[Proposition 2.4.1]{Kol19} and \cite[Theorem 2.5.2]{Zol86} for more details.

Keeping this in mind, we see that a difference between the functions $Z$ and $Y$, given by \eqref{Z:G} and \eqref{Y:G} respectively, is the factor $s^{-1}$ inside the improper Riemann integral of $Z$.  Thus, we only need to check the corresponding two-sided estimates for
\[
\displaystyle\int_0^\infty G(t^\alpha s,x)s^{-\frac{1}{\alpha}}G_\alpha(1,s^{-\frac{1}{\alpha}})ds,
\]
which can be written equivalently as
\[
\displaystyle\int_0^\infty G(t^\alpha s,x) s^{-1-\frac{1}{\alpha}}G_\alpha(1,s^{-\frac{1}{\alpha}})s\,ds\asymp I_1+I_2.
\]
Here, similar to the integrals that are used in \cite[expression (33)]{JK19},
\begin{equation}
\label{I_1}
I_1:=\int_0^1 \min\left(t^{-\frac{\alpha d}{\beta}}\Omega^{-1-\frac{d}{\beta}}s,t^{-\frac{\alpha d}{\beta}}s^{-\frac{d}{\beta}}\right) s\, ds
\end{equation}
and
\begin{equation}
\label{I_2}
I_2:=\int_1^\infty \min\left(t^{-\frac{\alpha d}{\beta}}\Omega^{-1-\frac{d}{\beta}}s,t^{-\frac{\alpha d}{\beta}}s^{-\frac{d}{\beta}}\right) s^{-1-\frac{1}{\alpha}}f_\alpha(s^{-\frac{1}{\alpha}}) s\, ds,
\end{equation}
where
\[
\min\left(t^{-\frac{\alpha d}{\beta}}\Omega^{-1-\frac{d}{\beta}}s,t^{-\frac{\alpha d}{\beta}}s^{-\frac{d}{\beta}}\right)=\begin{cases}t^{-\frac{\alpha d}{\beta}}\Omega^{-1-\frac{d}{\beta}}s, & \text{for } s<\Omega,\\t^{-\frac{\alpha d}{\beta}}s^{-\frac{d}{\beta}}, & \text{for } s\geq\Omega,\end{cases}
\]
as in \cite[expression (32)]{JK19}. Next, we need to analyse the two-sided estimates for $I_j$, $j=1,2$. The case $\Omega\leq 1$ yields
\begin{align*}
I_1&=t^{-\frac{\alpha d}{\beta}}\Omega^{-1-\frac{d}{\beta}}\int_0^\Omega s^2 ds
+t^{-\frac{\alpha d}{\beta}}\int_\Omega^1 s^{-\frac{d}{\beta}} s\, ds\\
&=\frac{1}{3}t^{-\frac{\alpha d}{\beta}}\Omega^{2-\frac{d}{\beta}} +t^{-\frac{\alpha d}{\beta}}\int_\Omega^1 s^{1-\frac{d}{\beta}} ds.
\end{align*}
The last integral requires the sub-cases $d<2\beta$, $d=2\beta$ and $d>2\beta$:
\[
t^{-\frac{\alpha d}{\beta}}\int_\Omega^1 s^{1-\frac{d}{\beta}}ds=\begin{cases}t^{-\frac{\alpha d}{\beta}}\frac{1}{2-\frac{d}{\beta}}\left(1-\Omega^{2-\frac{d}{\beta}}\right), & \text{for } d<2\beta,\\
t^{-2\alpha}|log(\Omega)|, & \text{for } d=2\beta,\\
t^{-\frac{\alpha d}{\beta}}\frac{1}{\frac{d}{\beta}-2}\left(\Omega^{2-\frac{d}{\beta}}-1\right), & \text{for } d>2\beta.\end{cases}
\]
For $I_2$, since $\Omega\leq 1$, we have that
\begin{align*}
I_2&=t^{-\frac{\alpha d}{\beta}}\int_1^\infty s^{-\frac{d}{\beta}} s^{-1-\frac{1}{\alpha}}f_\alpha(s^{-\frac{1}{\alpha}}) s\, ds\\
&=t^{-\frac{\alpha d}{\beta}}\int_1^\infty s^{-\frac{d}{\beta}-\frac{1}{\alpha}}s^{\frac{2-\alpha}{2\alpha(1-\alpha)}}e^{-(1-\alpha)\alpha^{\frac{\alpha}{1-\alpha}}s^{\frac{1}{1-\alpha}}} ds\\
&=t^{-\frac{\alpha d}{\beta}}\int_1^\infty s^{-\frac{d}{\beta}+\frac{1}{2(1-\alpha)}}e^{-(1-\alpha)\alpha^{\frac{\alpha}{1-\alpha}}s^{\frac{1}{1-\alpha}}} ds\\
&=C t^{-\frac{\alpha d}{\beta}}.
\end{align*}
We point out that the improper integral is convergent due to the Laplace method for integrals (see e.g., \cite[(A1)]{JK19}). Therefore, if $d<2\beta$ we find that 
\[
C t^{-\frac{\alpha d}{\beta}}=I_2\leq I_1+I_2=\frac{1}{3}t^{-\frac{\alpha d}{\beta}}\Omega^{2-\frac{d}{\beta}}
+t^{-\frac{\alpha d}{\beta}}\frac{1}{2-\frac{d}{\beta}}\left(1-\Omega^{2-\frac{d}{\beta}}\right)\lesssim t^{-\frac{\alpha d}{\beta}},
\]
if $d=2\beta$ we obtain
\[
I_1+I_2=\frac{1}{3}t^{-2\alpha}+t^{-2\alpha}|log(\Omega)| +C t^{-2\alpha}
\]
and $d>2\beta$ implies that
\[
\frac{1}{3}t^{-\frac{\alpha d}{\beta}}\Omega^{2-\frac{d}{\beta}}\leq I_1\leq I_1+I_2=\frac{1}{3}t^{-\frac{\alpha d}{\beta}}\Omega^{2-\frac{d}{\beta}}+t^{-\frac{\alpha d}{\beta}}\frac{1}{\frac{d}{\beta}-2}\left(\Omega^{2-\frac{d}{\beta}}-1\right)\lesssim t^{-\frac{\alpha d}{\beta}}\Omega^{2-\frac{d}{\beta}}.
\]
Since the additional factor $t^{\alpha-1}$ is a constant for the integral of $Y$, the estimates hold for $\Omega\leq 1$. Now, the case $\Omega\geq 1$ implies that
\begin{align*}
I_1&=t^{-\frac{\alpha d}{\beta}}\Omega^{-1-\frac{d}{\beta}}\int_0^1 s^2 ds\\
&=\frac{1}{3}t^{-\frac{\alpha d}{\beta}}\Omega^{-1-\frac{d}{\beta}}
\end{align*}
and
\[
I_2=t^{-\frac{\alpha d}{\beta}}\Omega^{-1-\frac{d}{\beta}}\int_1^\Omega s^{1-\frac{1}{\alpha}}f_\alpha(s^{-\frac{1}{\alpha}})ds + t^{-\frac{\alpha d}{\beta}}\int_\Omega^\infty s^{-\frac{d}{\beta}-\frac{1}{\alpha}}f_\alpha(s^{-\frac{1}{\alpha}}) ds.
\]
We see that 
\begin{align*}
I_2&\leq t^{-\frac{\alpha d}{\beta}}\Omega^{-1-\frac{d}{\beta}}\int_1^\Omega s^{1-\frac{1}{\alpha}}f_\alpha(s^{-\frac{1}{\alpha}})ds+t^{-\frac{\alpha d}{\beta}}\Omega^{-1-\frac{d}{\beta}}\int_\Omega^\infty s^{1-\frac{1}{\alpha}}f_\alpha(s^{-\frac{1}{\alpha}}) ds\\
&=t^{-\frac{\alpha d}{\beta}}\Omega^{-1-\frac{d}{\beta}}\int_1^\infty s^{1-\frac{1}{\alpha}}f_\alpha(s^{-\frac{1}{\alpha}})ds\\
&=C_1t^{-\frac{\alpha d}{\beta}}\Omega^{-1-\frac{d}{\beta}}.
\end{align*}
On the other hand,
\begin{align*}
I_2&\geq t^{-\frac{\alpha d}{\beta}}\Omega^{-1-\frac{d}{\beta}}\int_1^\Omega s^{-\frac{d}{\beta}-\frac{1}{\alpha}}f_\alpha(s^{-\frac{1}{\alpha}})ds + t^{-\frac{\alpha d}{\beta}}\Omega^{-1-\frac{d}{\beta}}\int_\Omega^\infty s^{-\frac{d}{\beta}-\frac{1}{\alpha}}f_\alpha(s^{-\frac{1}{\alpha}}) ds\\
&=t^{-\frac{\alpha d}{\beta}}\Omega^{-1-\frac{d}{\beta}}\int_1^\infty s^{-\frac{d}{\beta}-\frac{1}{\alpha}}f_\alpha(s^{-\frac{1}{\alpha}})ds \\
&=C_2t^{-\frac{\alpha d}{\beta}}\Omega^{-1-\frac{d}{\beta}}.
\end{align*}
These bounds show that $I_1+I_2\asymp t^{-\frac{\alpha d}{\beta}}\Omega^{-1-\frac{d}{\beta}}$ for $\Omega\geq 1$. The factor $t^{\alpha-1}$ completes the proof.
\end{proof}

\begin{remark}
We note a singularity at the origin with respect to the spatial variable for $Z$, whenever $d\geq\beta$, and for $Y$ whenever $d\geq 2\beta$. It is well known that this type of singularities occurs in the equations of fractional evolution in time, even if $\beta=2$ and $\omega_\mu\equiv 1$.
\end{remark}
In order to formulate our results, we also recall the following properties of the fundamental solutions $Z$ and $Y$, lemmata \ref{cotasDeltaZ}-\ref{cotasDeltaEspacialY} (see proofs in \cite[Section 2]{SV22}).
\begin{lemma}
	\label{cotasDeltaZ}
	Under the same assumptions as Proposition \ref{cotasZ}, there exists a positive constant $C$ for all $t_1,t_2>0$ and $x\in\RR^d$, such that there exists $t_c>0$, between $t_1$ and $t_2$, and the following estimates for $Z$ hold with $\Omega_c=\norm{x}^\beta t_c^{-\alpha}$. For $\Omega_c\leq 1$,		
	\begin{equation*}
		|Z(t_1,x)-Z(t_2,x)|\leq C |t_1-t_2|
		\begin{cases}
			t_c^{-\frac{\alpha d}{\beta}-1} &\quad \text{if}\quad d<\beta,\\
			t_c^{-\alpha-1}(|\log(\Omega_c)|+1) &\quad \text{if}\quad d=\beta,\\
			t_c^{-\frac{\alpha d}{\beta}-1}\Omega_c^{1-\frac{d}{\beta}}&\quad \text{if}\quad d>\beta,		
		\end{cases}
	\end{equation*}
	and for $\Omega_c\geq 1$,
	\[
	|Z(t_1,x)-Z(t_2,x)|\leq C |t_1-t_2| t_c^{-\frac{\alpha d}{\beta}-1}\Omega_c^{-1-\frac{d}{\beta}}.
	\]
\end{lemma}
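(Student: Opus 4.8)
The formulation ``there exists $t_c>0$ between $t_1$ and $t_2$'' indicates that the argument should rest on the one-variable mean value theorem applied to the map $t\mapsto Z(t,x)$ for fixed $x$. The case $x=0$ is either covered by the same reasoning (when $d<\beta$, so that $Z(t,0)<\infty$) or vacuous (when $d\ge\beta$, since then $\Omega_c=0$ and the right-hand side of each estimate in the lemma equals $+\infty$), so I take $x\neq0$ in what follows. The plan has three parts: \textbf{(a)} prove $Z(\cdot,x)\in C^1((0,\infty))$ and obtain a workable formula for $\partial_t Z$; \textbf{(b)} establish the pointwise bound $|\partial_t Z(t,x)|\lesssim t^{-1}Z(t,x)$; \textbf{(c)} apply the mean value theorem and substitute the estimates of Proposition \ref{cotasZ}. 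Granting \textbf{(a)}--\textbf{(b)}, part \textbf{(c)} is immediate: one gets $|Z(t_1,x)-Z(t_2,x)|=|t_1-t_2|\,|\partial_t Z(t_c,x)|\lesssim|t_1-t_2|\,t_c^{-1}Z(t_c,x)$ for some $t_c$ between $t_1$ and $t_2$, and multiplying each of the four regimes of Proposition \ref{cotasZ} (with $\Omega_c=\norm{x}^\beta t_c^{-\alpha}$) by $t_c^{-1}$ reproduces precisely the four regimes claimed here.

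For \textbf{(a)} I would differentiate the subordination identity \eqref{Z:G} under the integral sign. Setting $\tau=t^\alpha s$, the chain rule gives $\partial_t\big(G(t^\alpha s,x)\big)=\alpha t^{\alpha-1}s\,(\partial_\tau G)(t^\alpha s,x)$, hence
\[
\partial_t Z(t,x)=t^{\alpha-1}\int_0^\infty(\partial_\tau G)(t^\alpha s,x)\,s^{-\frac{1}{\alpha}}G_\alpha(1,s^{-\frac{1}{\alpha}})\,ds,
\]
which is the integral defining $Y$ in \eqref{Y:G} but with $\partial_\tau G$ in place of $G$. The interchange of $\partial_t$ and $\int$ is justified by a dominating function, locally uniform in $t$: near $s=0$ one combines $G(t^\alpha s,x)\lesssim\min\!\big((t^\alpha s)^{-d/\beta},(t^\alpha s)\norm{x}^{-d-\beta}\big)$ with the boundedness of $s^{-1-1/\alpha}G_\alpha(1,s^{-1/\alpha})$ as $s\to0^+$, itself a consequence of $w_\alpha(\tau)\sim C\tau^{-1-\alpha}$ at $+\infty$ (already used in the proof of Proposition \ref{cotasY}); near $s=\infty$ the super-exponential factor $e^{-c_\alpha s^{1/(1-\alpha)}}$ contained in $f_\alpha(s^{-1/\alpha})$ absorbs all polynomial growth. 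The same bounds yield continuity of $\partial_t Z(\cdot,x)$, so $Z(\cdot,x)\in C^1((0,\infty))$.

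Part \textbf{(b)} is the core of the argument. By the self-similarity of the stable semigroup, $G(\tau,x)=\tau^{-d/\beta}G(1,\tau^{-1/\beta}x)$; differentiating in $\tau$,
\[
\partial_\tau G(\tau,x)=-\frac{1}{\beta}\,\tau^{-\frac{d}{\beta}-1}\Big(d\,G(1,y)+y\cdot\nabla_y G(1,y)\Big)\Big|_{y=\tau^{-1/\beta}x}.
\]
Under $(\mathcal{H}_1)$ the stable density and its gradient obey $G(1,y)+\norm{y}\,\norm{\nabla_y G(1,y)}\lesssim(1+\norm{y})^{-d-\beta}\asymp G(1,y)$ --- these are exactly the bounds underlying Proposition \ref{cotasZ}; see \cite[Section 2]{JK19} and \cite[Theorem 4.5.1]{Kol19} --- whence the derivative estimate $|\partial_\tau G(\tau,x)|\lesssim\tau^{-1}G(\tau,x)$. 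Inserting this into the formula from \textbf{(a)}, and using $G\ge0$ and $G_\alpha(1,\cdot)\ge0$,
\begin{align*}
|\partial_t Z(t,x)|&\lesssim t^{\alpha-1}\int_0^\infty(t^\alpha s)^{-1}G(t^\alpha s,x)\,s^{-\frac{1}{\alpha}}G_\alpha(1,s^{-\frac{1}{\alpha}})\,ds\\
&=t^{-1}\int_0^\infty G(t^\alpha s,x)\,s^{-1-\frac{1}{\alpha}}G_\alpha(1,s^{-\frac{1}{\alpha}})\,ds=\alpha\,t^{-1}Z(t,x),
\end{align*}
the last equality being \eqref{Z:G}. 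This is the bound sought in \textbf{(b)}, and combined with \textbf{(c)} it finishes the proof.

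The only non-routine ingredient is the pointwise derivative estimate $|\partial_\tau G(\tau,x)|\lesssim\tau^{-1}G(\tau,x)$ for the classical stable Green function; the rest --- the mean value theorem, the dominated-convergence justification in \textbf{(a)}, and reading the four regimes off Proposition \ref{cotasZ} --- is bookkeeping. That estimate is, however, a standard consequence of the self-similar form of $G$ together with the gradient bounds for $G(1,\cdot)$ established under $(\mathcal{H}_1)$ in the references above, so the scheme carries through; this is in essence the route followed in \cite[Section 2]{SV22}.
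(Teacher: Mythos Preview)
Your strategy is the right one and is precisely what the statement invites: apply the mean value theorem to $t\mapsto Z(t,x)$, reduce to the pointwise inequality $|\partial_t Z(t,x)|\lesssim t^{-1}Z(t,x)$, and then read the four regimes off Proposition~\ref{cotasZ}. The paper does not reproduce a proof here (it simply cites \cite[Section~2]{SV22}), but the argument there follows exactly this pattern, so in outline your proposal matches.

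One point deserves more care. In part \textbf{(b)} you bound $\partial_\tau G$ via the \emph{full} spatial gradient estimate $\norm{y}\,\norm{\nabla_y G(1,y)}\lesssim(1+\norm{y})^{-d-\beta}$, asserting that this holds under $(\mathcal{H}_1)$. The paper's own hypothesis bookkeeping suggests otherwise: every lemma that uses spatial derivatives of the fundamental solutions (Lemmata~\ref{cotasDeltaEspacialZ}, \ref{cotasDeltaEspacialY}, \ref{NormaGradienteY}) is stated under the stronger assumption $(\mathcal{H}_2)$, whereas the present lemma requires only $(\mathcal{H}_1)$. So your specific justification would prove the lemma under $(\mathcal{H}_2)$ rather than $(\mathcal{H}_1)$. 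Note, however, that the self-similarity identity you wrote involves only the \emph{radial} combination $y\cdot\nabla_y G(1,y)$, not the full gradient; via Euler's relation $\xi\cdot\nabla\psi=\beta\psi$ this radial derivative is equivalent to $\partial_\tau G$ itself, whose Fourier multiplier $-\psi(\xi)e^{-\tau\psi(\xi)}$ needs no extra smoothness of $\omega_\mu$. A clean way to stay squarely within $(\mathcal{H}_1)$, and the one taken in \cite{SV22}, is to first substitute $r=t^\alpha s$ in \eqref{Z:G} so that the $t$-dependence sits only in the argument of the one-dimensional density $G_\alpha(1,\cdot)$; differentiating there (the derivative of $w_\alpha$ has the same asymptotics as $w_\alpha$ with one extra power of $\tau^{-1}$, independently of any hypothesis on $\mu$) and rerunning the $I_1+I_2$ splitting as in the proof of Proposition~\ref{cotasY} yields the stated bounds directly. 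With this adjustment your argument is complete.
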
	

\begin{lemma}
	\label{cotasDeltaY}
	Under the same assumptions as Proposition \ref{cotasZ}, there exists a positive constant $C$ for all $t_1,t_2>0$ and $x\in\RR^d$, such that there exists $t_c>0$, between $t_1$ and $t_2$, and the following estimates for $Y$ hold with $\Omega_c=\norm{x}^\beta t_c^{-\alpha}$. For $\Omega_c\leq 1$,
	\[
	|Y(t_1,x)-Y(t_2,x)|\leq C |t_1-t_2|	
	\begin{cases}
		t_c^{-\frac{\alpha d}{\beta}+\alpha-2} &\quad \text{if}\quad d<2\beta,\\
		t_c^{-\alpha-2}(|\log(\Omega_c)|+1) &\quad \text{if}\quad d=2\beta,\\
		t_c^{-\frac{\alpha d}{\beta}+\alpha-2}\Omega_c^{2-\frac{d}{\beta}}&\quad \text{if}\quad d>2\beta,		
	\end{cases}
	\]
	and for $\Omega_c\geq 1$,
	\[
	|Y(t_1,x)-Y(t_2,x)|\leq C |t_1-t_2| t_c^{-\frac{\alpha d}{\beta}+\alpha-2}\Omega_c^{-1-\frac{d}{\beta}}.
	\]	
\end{lemma}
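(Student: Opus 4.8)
The plan is to reduce the time-increment estimate to a pointwise bound on $\partial_t Y$ via the mean value theorem, and then to bound $\partial_t Y$ by $t^{-1}Y$. Fix $x\in\RR^d\setminus\{0\}$. Because the exponential factor $f_\alpha(s^{-1/\alpha})$ governing the small-argument behaviour of $G_\alpha$ forces superpolynomial decay of the integrand in \eqref{Y:G} as $s\to0$, while the tail $G_\alpha(1,\tau)\sim C\tau^{-1-\alpha}$ controls it as $s\to\infty$, the map $t\mapsto Y(t,x)$ is $C^1$ on $(0,\infty)$ and differentiation under the integral sign is legitimate, uniformly for $t$ in compact subsets of $(0,\infty)$. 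Hence there is $t_c$ strictly between $t_1$ and $t_2$ with $Y(t_1,x)-Y(t_2,x)=\partial_tY(t_c,x)\,(t_1-t_2)$, and the lemma will follow once I show
\[
|\partial_tY(t,x)|\;\lesssim\; t^{-1}\,Y(t,x)\qquad\text{for all }t>0,\ x\ne 0 ,
\]
since by Proposition \ref{cotasY} the right-hand side is exactly $t^{-1}$ times the quantity estimated there, which is precisely the claimed bound (with $\Omega_c=\norm{x}^\beta t_c^{-\alpha}$). For $x=0$ with $d\ge2\beta$ both sides of the asserted inequality are infinite, so there is nothing to prove there.

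To prove the displayed bound I would differentiate \eqref{Y:G}. Writing $\partial_\tau G$ for the derivative of $G$ in its first slot and using $\frac{d}{dt}G(t^\alpha s,x)=\alpha s\,t^{\alpha-1}(\partial_\tau G)(t^\alpha s,x)$, one gets
\[
\partial_tY(t,x)=(\alpha-1)t^{-1}Y(t,x)+\alpha\,t^{2\alpha-2}\int_0^\infty s\,(\partial_\tau G)(t^\alpha s,x)\,s^{-\frac1\alpha}G_\alpha\!\left(1,s^{-\frac1\alpha}\right)ds .
\]
The first summand equals $(1-\alpha)t^{-1}Y(t,x)$ in absolute value, which is already of the required form. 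For the second I would use the self-similar structure $G(\tau,x)=\tau^{-d/\beta}\Phi(\tau^{-1/\beta}x)$ of the stable density, which under $(\mathcal{H}_1)$ — together with the profile bounds behind \cite[Theorem 2]{JK19} and \cite[Proposition 4.5.1, Theorem 4.5.1]{Kol19} — yields $|(\partial_\tau G)(\tau,x)|\lesssim\tau^{-1}G(\tau,x)\asymp\min(\tau^{-d/\beta-1},\norm{x}^{-d-\beta})$. Substituting $\tau=t^\alpha s$ gives $s\,|(\partial_\tau G)(t^\alpha s,x)|\lesssim t^{-\alpha}G(t^\alpha s,x)$, so the second summand is $\lesssim t^{\alpha-2}\int_0^\infty G(t^\alpha s,x)s^{-1/\alpha}G_\alpha(1,s^{-1/\alpha})\,ds=t^{-1}Y(t,x)$; equivalently, splitting $\min(\cdots)$ at $s=\Omega$ and dichotomizing $\int_0^1$ versus $\int_1^\infty$ exactly as in the proof of Proposition \ref{cotasY} reproduces $t^{-1}$ times the very integrals $I_1+I_2$ of \eqref{I_1}--\eqref{I_2}, in all sub-cases (including the borderline $d=2\beta$) and for $\Omega\ge1$.

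The hard part will be the two technical points hidden in the last paragraph: first, a clean justification that one may differentiate under the integral in \eqref{Y:G} uniformly on compact time intervals, so that the mean value theorem genuinely applies; and second, the pointwise gradient-in-time estimate $|\partial_\tau G(\tau,x)|\lesssim\tau^{-1}G(\tau,x)$ near the spatial singularity, where in the borderline dimension $d=2\beta$ one must track the logarithmic gain coming from $\int_\Omega^1 s^{1-d/\beta}\,ds$ rather than absorbing it into a power of $\Omega$. Neither step introduces anything genuinely new beyond the machinery already used for Propositions \ref{cotasZ} and \ref{cotasY} and for Lemma \ref{cotasDeltaZ}; once they are in place the estimates for $Y$ follow. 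The full argument is carried out in \cite[Section 2]{SV22}.
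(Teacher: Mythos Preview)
Your proposal is correct and matches the paper's approach: the paper does not prove this lemma in the present text but defers to \cite[Section 2]{SV22}, and the argument there is precisely the mean value theorem reduction you sketch---differentiate \eqref{Y:G} under the integral, use the self-similar bound $|\partial_\tau G(\tau,x)|\lesssim\tau^{-1}G(\tau,x)$ for the stable density, and conclude $|\partial_tY(t,x)|\lesssim t^{-1}Y(t,x)$, so that the time-increment bounds are exactly $|t_1-t_2|\,t_c^{-1}$ times the two-sided estimates of Proposition~\ref{cotasY}. Your identification of the two delicate points (justifying differentiation under the integral, and the borderline $d=2\beta$ logarithm) is accurate, and neither requires ideas beyond those already used for Propositions~\ref{cotasZ}--\ref{cotasY}.
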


\begin{lemma}
	\label{cotasDeltaEspacialZ}
Let $\alpha\in (0,1)$ and $\beta\in (0,2)$. Assume the hypothesis $(\mathcal{H}_2)$ holds. Then there exists a positive constant $C$ for all $t>0$ and $x_1,x_2\in\RR^d$, such that there exists $\zeta$ in the open segment connecting $x_1$ and $x_2$, and the following estimates for $Z$ hold with $\Omega_\zeta=\norm{\zeta}^\beta t^{-\alpha}$. For $\Omega_\zeta\leq 1$,		
	\[
	|Z(t,x_1)-Z(t,x_2)|\leq C \norm{x_1-x_2}t^{-\frac{\alpha (d+1)}{\beta}}\Omega_\zeta^{1-\frac{d+1}{\beta}}
	\]
	and for $\Omega_\zeta\geq 1$,
	\[
	|Z(t,x_1)-Z(t,x_2)|\leq C \norm{x_1-x_2} t^{-\frac{\alpha (d+1)}{\beta}}\Omega_\zeta^{-1-\frac{d+1}{\beta}}.
	\]
\end{lemma}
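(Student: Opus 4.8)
The plan is to derive the estimate from a mean value theorem in the spatial variable, which reduces it to a pointwise upper bound for $\nabla_x Z(t,\cdot)$, and then to read that gradient bound off the subordination formula \eqref{Z:G} by repeating, essentially verbatim, the computation that establishes Proposition \ref{cotasZ} (carried out in \cite[Theorem 2]{JK19}) --- but with the dimension $d$ replaced by $d+1$. The replacement is the correct one because, for the stable-like Green function $G$, differentiating once in $x$ costs one power of the parabolic scale $\tau^{1/\beta}$ and sharpens the spatial tail by one unit, so that
\[
|\nabla_x G(\tau,x)|\lesssim\min\bigl(\tau^{-\frac{d+1}{\beta}},\;\tau\,\norm{x}^{-(d+1+\beta)}\bigr),\qquad \tau>0,\;x\neq0,
\]
which is exactly the known upper bound for $G(\tau,x)$ with $d\mapsto d+1$. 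This first-order derivative estimate for $G$ is precisely where the stronger hypothesis $(\mathcal{H}_2)$ is used: the extra order of differentiability of $\omega_\mu$ relative to $(\mathcal{H}_1)$ --- the latter already sufficing for $G$, and hence for $Z$, themselves --- is what controls $\nabla_x G$; cf.\ \cite[Theorem 4.5.1]{Kol19} and \cite[Theorem 2]{JK19}.

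Granting this bound, I would first differentiate \eqref{Z:G} under the integral sign, which is legitimate on $\RR^d\setminus\{0\}$ since the bound above decays rapidly in $\tau$ while $G_\alpha(1,s^{-1/\alpha})$ decays super-exponentially as $s\to\infty$, obtaining $\nabla_x Z(t,x)=\frac1\alpha\int_0^\infty\nabla_x G(t^\alpha s,x)\,s^{-1-\frac1\alpha}G_\alpha(1,s^{-\frac1\alpha})\,ds$ for $x\neq0$; in particular $Z(t,\cdot)\in C^1(\RR^d\setminus\{0\})$. Substituting the bound for $\nabla_x G$ and performing the same change of variables and splitting as in the proof of Propositions \ref{cotasZ}--\ref{cotasY} (following \cite[expressions (32) and (33)]{JK19}: split $\int_0^\infty=\int_0^1+\int_1^\infty$, cut $(0,1)$ at $s=\Omega$, and control the $s\to\infty$ factor by the Laplace method) one arrives at the integrals behind Proposition \ref{cotasZ} with $d$ replaced by $d+1$. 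Since $d\ge1$ and $\beta<2$ force $d+1>\beta$, only the sub-case ``$d>\beta$'' and the regime $\Omega\ge1$ occur, and the computation yields $|\nabla_x Z(t,x)|\le C\,t^{-\frac{\alpha(d+1)}{\beta}}\Omega^{1-\frac{d+1}{\beta}}$ when $\Omega\le1$ and $|\nabla_x Z(t,x)|\le C\,t^{-\frac{\alpha(d+1)}{\beta}}\Omega^{-1-\frac{d+1}{\beta}}$ when $\Omega\ge1$, with $\Omega=\norm{x}^\beta t^{-\alpha}$. Finally, for $x_1,x_2$ whose connecting segment avoids the origin, the map $r\mapsto Z(t,x_2+r(x_1-x_2))$ is $C^1$ on $[0,1]$, so the mean value theorem provides $\zeta$ in the open segment with $Z(t,x_1)-Z(t,x_2)=\nabla_x Z(t,\zeta)\cdot(x_1-x_2)$; inserting the gradient bound at $\zeta$ and writing $\Omega_\zeta=\norm{\zeta}^\beta t^{-\alpha}$ gives both asserted inequalities.

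I expect the principal obstacle to be the first part --- establishing the sharp $d\mapsto d+1$ gradient estimate for $G$ (this is where $(\mathcal{H}_2)$ enters) and rigorously justifying the differentiation under the integral sign in \eqref{Z:G}; once these are available, the remainder is a line-by-line repetition of arguments already displayed in the excerpt. A secondary subtlety is the origin: $\nabla_x Z(t,\cdot)$ is genuinely singular there --- indeed not locally integrable when $\beta\le1$ --- so the mean value theorem must be invoked on $\RR^d\setminus\{0\}$, and pairs $x_1,x_2$ for which the segment $[x_1,x_2]$ passes through $0$ must be handled by a separate splitting argument, or else are immaterial for the applications in Section \ref{sec:3}, where they form a set of measure zero.
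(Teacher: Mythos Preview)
Your approach is correct and is precisely the one intended: the paper does not supply its own proof of this lemma but refers to \cite[Section~2]{SV22}, and the argument there proceeds exactly as you outline --- apply the mean value theorem in the spatial variable to reduce to a pointwise bound on $\nabla_x Z$, obtain that bound by differentiating the subordination formula \eqref{Z:G} and invoking the gradient estimate for the stable Green function $G$ (which under $(\mathcal{H}_2)$ obeys the $d\mapsto d+1$ version of the bound for $G$ itself), and then rerun the computation behind Proposition~\ref{cotasZ} with $d$ replaced by $d+1$, noting that $d+1>\beta$ forces only one sub-case. Your identification of the role of $(\mathcal{H}_2)$ and of the singularity at the origin is also accurate.
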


\begin{lemma}
	\label{cotasDeltaEspacialY}
	Under the same assumptions as Lemma \ref{cotasDeltaEspacialZ}, then there exists a positive constant $C$ for all $t>0$ and $x_1,x_2\in\RR^d$, such that there exists $\zeta$ in the open segment connecting $x_1$ and $x_2$, and the following estimates for $Y$ hold with $\Omega_\zeta=\norm{\zeta}^\beta t^{-\alpha}$. For $\Omega_\zeta\leq 1$,
	\begin{equation*}
		|Y(t,x_1)-Y(t,x_2)|\leq C \norm{x_1-x_2}
		\begin{cases}
			t^{-\frac{\alpha (d+1)}{\beta}+\alpha-1} &\quad \text{if}\quad d+1<2\beta,\\
			t^{-\alpha-1}(|\log(\Omega_\zeta)|+1) &\quad \text{if}\quad d+1=2\beta,\\
			t^{-\frac{\alpha (d+1)}{\beta}+\alpha-1}\Omega_\zeta^{2-\frac{d+1}{\beta}}&\quad \text{if}\quad d+1>2\beta,		
		\end{cases}
	\end{equation*}
	and for $\Omega_\zeta\geq 1$,
	\[
	|Y(t,x_1)-Y(t,x_2)|\leq C \norm{x_1-x_2} t^{-\frac{\alpha (d+1)}{\beta}+\alpha-1}\Omega_\zeta^{-1-\frac{d+1}{\beta}}.
	\]	
\end{lemma}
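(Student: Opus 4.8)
The plan is to reduce the spatial increment of $Y$ to a pointwise gradient bound via the mean value theorem, and then to reproduce, with $d$ replaced by $d+1$, the subordination computation already carried out for Proposition \ref{cotasY}. Fixing $t>0$ and $x_1,x_2\in\RR^d$, set $\phi(r):=Y(t,x_2+r(x_1-x_2))$ for $r\in[0,1]$; since $Y(t,\cdot)$ is scalar and smooth away from the origin, there is $\xi_0\in(0,1)$ with $\phi(1)-\phi(0)=\phi'(\xi_0)$, so that with $\zeta:=x_2+\xi_0(x_1-x_2)$ one has $Y(t,x_1)-Y(t,x_2)=\nabla_x Y(t,\zeta)\cdot(x_1-x_2)$, whence $\abs{Y(t,x_1)-Y(t,x_2)}\leq \norm{x_1-x_2}\,\abs{\nabla_x Y(t,\zeta)}$. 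It therefore suffices to bound $\abs{\nabla_x Y(t,\zeta)}$ by the right-hand side of Proposition \ref{cotasY} with $d$ replaced by $d+1$ and $\Omega$ replaced by $\Omega_\zeta=\norm{\zeta}^\beta t^{-\alpha}$; a direct inspection shows this is exactly the quantity appearing in the statement.

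The key ingredient is the gradient estimate for the order-one Green function $G$. Under hypothesis $(\mathcal{H}_2)$ (the extra derivative of $\omega_\mu$ is used precisely here), differentiating the symbol adds one factor of $\norm{\xi}$ in Fourier variables, upgrading the two-sided bound for $G$ to the upper bound
\[
\abs{\nabla_x G(\tau,x)}\lesssim \min\left(\tau^{-\frac{d+1}{\beta}},\,\tau\,\norm{x}^{-(d+1+\beta)}\right),\qquad \tau>0,\;x\neq 0,
\]
i.e. the estimate for $G$ with $d$ replaced by $d+1$ (cf. the derivative bounds in \cite[Theorem 2]{JK19} and \cite[Section 2]{SV22}). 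This decay, together with the asymptotics of $G_\alpha$ recalled in the proof of Proposition \ref{cotasY}, legitimises differentiation under the integral sign in \eqref{Y:G}, giving
\[
\nabla_x Y(t,x)=\int_0^\infty t^{\alpha-1}\,\nabla_x G(t^\alpha s,x)\,s^{-\frac{1}{\alpha}}G_\alpha(1,s^{-\frac{1}{\alpha}})\,ds.
\]

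Next I would substitute $\tau=t^\alpha s$ and rewrite the integrand exactly as in the proof of Proposition \ref{cotasY}, the only change being $d\mapsto d+1$: with $\Omega=\Omega_\zeta$,
\[
\abs{\nabla_x G(t^\alpha s,\zeta)}\lesssim\min\left(t^{-\frac{\alpha(d+1)}{\beta}}\Omega_\zeta^{-1-\frac{d+1}{\beta}}s,\;t^{-\frac{\alpha(d+1)}{\beta}}s^{-\frac{d+1}{\beta}}\right),
\]
with crossover at $s=\Omega_\zeta$. Splitting the $s$-integral as $\int_0^1=:I_1$ and $\int_1^\infty=:I_2$, and using that the subordination weight $s^{-1-1/\alpha}G_\alpha(1,s^{-1/\alpha})$ is $\asymp 1$ on $(0,1)$ and carries the super-exponential factor $f_\alpha(s^{-1/\alpha})$ on $(1,\infty)$ (so $I_2$ converges by the Laplace method), the bounds follow verbatim from Proposition \ref{cotasY}. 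For $\Omega_\zeta\leq 1$ the decisive term is $\int_{\Omega_\zeta}^1 s^{1-\frac{d+1}{\beta}}\,ds$, whose behaviour splits into the subcases $d+1<2\beta$, $d+1=2\beta$ (producing the logarithm) and $d+1>2\beta$; for $\Omega_\zeta\geq 1$ one obtains $I_1+I_2\asymp t^{-\frac{\alpha(d+1)}{\beta}}\Omega_\zeta^{-1-\frac{d+1}{\beta}}$. Multiplying by the constant prefactor $t^{\alpha-1}$ converts $t^{-\frac{\alpha(d+1)}{\beta}}$ into $t^{-\frac{\alpha(d+1)}{\beta}+\alpha-1}$, and one checks that at $d+1=2\beta$ this exponent equals $-\alpha-1$, matching the statement; multiplying through by $\norm{x_1-x_2}$ and recalling $\Omega_\zeta=\norm{\zeta}^\beta t^{-\alpha}$ then yields all four cases.

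The main obstacle is the gradient bound for $G$: establishing $\abs{\nabla_x G(\tau,x)}\lesssim\min(\tau^{-(d+1)/\beta},\tau\norm{x}^{-(d+1+\beta)})$ requires the additional smoothness of $\omega_\mu$ in $(\mathcal{H}_2)$ and a careful stationary-phase/oscillatory-integral analysis of $\nabla_x G$, exactly the step that distinguishes $(\mathcal{H}_2)$ from $(\mathcal{H}_1)$. Once this pointwise derivative estimate and the differentiation under the integral are in hand, the rest is the routine repetition of the $I_1+I_2$ computation with $d\mapsto d+1$. A minor point to watch is the uniformity of the implied constants as $\zeta$ ranges over the segment joining $x_1$ and $x_2$, which is automatic since every bound depends on $\zeta$ only through $\Omega_\zeta$.
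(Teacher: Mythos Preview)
Your plan is correct and matches the paper's approach: the paper does not give its own proof of this lemma but refers to \cite[Section~2]{SV22}, and the argument there is precisely the one you outline---apply the mean value theorem to reduce to $\norm{\nabla_x Y(t,\zeta)}$, differentiate under the subordination integral \eqref{Y:G}, invoke the gradient bound $\abs{\nabla_x G}$ (which under $(\mathcal{H}_2)$ reproduces the $G$-estimate with $d\mapsto d+1$), and rerun the $I_1+I_2$ computation of Proposition~\ref{cotasY} with that substitution. Your identification of the extra smoothness in $(\mathcal{H}_2)$ as being needed exactly for the derivative bound on $G$ is also on point.
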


\begin{lemma}
\label{NormaGradienteY}
Under the same assumptions as Lemma \ref{cotasDeltaEspacialZ}, then there exists a positive constant $C$ for all $t>0$ and $x_1,x_2\in\RR^d$, such that the estimate
\begin{equation}
\label{cotasNablaY}
\norm{Y(t,\cdot-x_1)-Y(t,\cdot-x_2)}_q\leq C \norm{x_1-x_2}\norm{\nabla Y(t,\cdot)}_q\lesssim \norm{x_1-x_2} t^{-\frac{\alpha d}{\beta}\left(1-\frac{1}{q}\right)-\frac{\alpha}{\beta}+\alpha-1}	
\end{equation} 	
is true for $1\leq q<\kappa'$, where $\kappa':=\begin{cases}\frac{d}{d+1-2\beta}, & d+1>2\beta\text{ and }\beta>\frac{1}{2},\\ \infty, & d+1\leq 2\beta. \end{cases}$

In the case of $d+1<2\beta$, \eqref{cotasNablaY} remains true for $q=\infty$.
\end{lemma}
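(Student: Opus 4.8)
The plan is to split the assertion into a general $L_q$-Lipschitz bound that reduces everything to $\norm{\nabla Y(t,\cdot)}_q$, followed by the decay estimate for $\norm{\nabla Y(t,\cdot)}_q$ itself. For the first part, write $f=Y(t,\cdot)$ and apply the fundamental theorem of calculus on the segment from $y-x_1$ to $y-x_2$,
\[
f(y-x_1)-f(y-x_2)=-\int_0^1\nabla f\bigl(y-x_2-\theta(x_1-x_2)\bigr)\cdot(x_1-x_2)\,d\theta,
\]
whence $\abs{f(y-x_1)-f(y-x_2)}\le\norm{x_1-x_2}\int_0^1\abs{\nabla f\bigl(y-x_2-\theta(x_1-x_2)\bigr)}\,d\theta$; taking the $L_q$-norm in $y$ and using Minkowski's integral inequality together with the translation invariance of $\norm{\cdot}_q$ gives $\norm{Y(t,\cdot-x_1)-Y(t,\cdot-x_2)}_q\le\norm{x_1-x_2}\,\norm{\nabla Y(t,\cdot)}_q$ for every $q\in[1,\infty]$, which is the first inequality in \eqref{cotasNablaY}. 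It then remains to show $\norm{\nabla Y(t,\cdot)}_q\lesssim t^{-\frac{\alpha d}{\beta}(1-\frac1q)-\frac{\alpha}{\beta}+\alpha-1}$.

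Next I would record the pointwise bounds for $\nabla Y(t,x)$ — obtained either by letting $x_1\to x_2=x$ in Lemma~\ref{cotasDeltaEspacialY}, or, more fundamentally, by differentiating \eqref{Y:G} under the integral sign and repeating the computation of $I_1,I_2$ from the proof of Proposition~\ref{cotasY} with $d$ replaced by $d+1$ (using $\abs{\nabla_x G(\tau,x)}\lesssim\min(\tau^{-(d+1)/\beta},\tau\norm{x}^{-d-1-\beta})$ for the Green function of \eqref{HomogeneaOrden1}). These read, with $\Omega=\norm{x}^\beta t^{-\alpha}$,
\[
\abs{\nabla Y(t,x)}\lesssim
\begin{cases}
t^{-\frac{\alpha(d+1)}{\beta}+\alpha-1}, & \Omega\le1,\ d+1<2\beta,\\
t^{-\alpha-1}\bigl(\abs{\log\Omega}+1\bigr), & \Omega\le1,\ d+1=2\beta,\\
t^{-\frac{\alpha(d+1)}{\beta}+\alpha-1}\,\Omega^{2-\frac{d+1}{\beta}}, & \Omega\le1,\ d+1>2\beta,\\
t^{-\frac{\alpha(d+1)}{\beta}+\alpha-1}\,\Omega^{-1-\frac{d+1}{\beta}}, & \Omega\ge1,
\end{cases}
\]
which is exactly the right-hand side of Lemma~\ref{cotasDeltaEspacialY}.

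Then I would estimate $\norm{\nabla Y(t,\cdot)}_q^q$ by splitting $\RR^d$ into $\{\norm{x}\le t^{\alpha/\beta}\}=\{\Omega\le1\}$ and its complement and using polar coordinates. On $\{\Omega\ge1\}$ the bound becomes $\abs{\nabla Y(t,x)}\lesssim t^{2\alpha-1}\norm{x}^{-d-1-\beta}$, whose $q$-th power is integrable at infinity because $q(d+1+\beta)>d$ for every $q\ge1$, contributing a constant times $t^{q(2\alpha-1)+\frac{\alpha}{\beta}(d-q(d+1+\beta))}$. On $\{\Omega\le1\}$, in the regime $d+1>2\beta$ the bound becomes $\abs{\nabla Y(t,x)}\lesssim t^{-\alpha-1}\norm{x}^{2\beta-d-1}$, and $\int_{\norm{x}\le t^{\alpha/\beta}}\norm{x}^{q(2\beta-d-1)}\,dx$ converges at the origin precisely when $q(d+1-2\beta)<d$, i.e.\ $q<\kappa'$; when $d+1=2\beta$ or $d+1<2\beta$ the integrand over $\{\Omega\le1\}$ is an integrable logarithmic power, respectively a constant, so no restriction on $q$ arises, matching $\kappa'=\infty$. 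A short calculation shows that the two pieces always carry the same power of $t$, namely $\norm{\nabla Y(t,\cdot)}_q^q\lesssim t^{\,\alpha q-q+\frac{\alpha d}{\beta}-\frac{\alpha q(d+1)}{\beta}}$, and taking $q$-th roots gives the claimed exponent. Finally, for $q=\infty$ with $d+1<2\beta$ the $\Omega$-factors above are $\le1$ on their respective regions, so $\norm{\nabla Y(t,\cdot)}_\infty\lesssim t^{-\frac{\alpha(d+1)}{\beta}+\alpha-1}=t^{-\frac{\alpha d}{\beta}-\frac{\alpha}{\beta}+\alpha-1}$, and the $L_\infty$-version of the reduction step closes that case.

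The main obstacle is the exponent bookkeeping in the last step: one must verify that the $\{\Omega\le1\}$ and $\{\Omega\ge1\}$ contributions produce exactly the same power of $t$ in each of the four regimes, and identify the two sharp integrability thresholds — $q(d+1+\beta)>d$ at infinity (automatic) and $q(d+1-2\beta)<d$ near the origin, the latter being precisely the condition $q<\kappa'$. One should also note that if $d+1>2\beta$ and $\beta\le\tfrac12$ then $\tfrac{d}{d+1-2\beta}\le1$, so there is no admissible $q\ge1$; this is why $\kappa'$ is defined only in the two listed cases.
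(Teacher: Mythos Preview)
Your proposal is correct and follows essentially the same route as the paper, which simply defers to \cite[Lemma~6.1]{SV22} together with the pointwise bounds of Lemma~\ref{cotasDeltaEspacialY}; you have written out in full the mean-value reduction to $\norm{\nabla Y(t,\cdot)}_q$ and the region-splitting computation that the reference carries out. The exponent bookkeeping and the identification of the integrability threshold $q<\kappa'$ are accurate.
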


\begin{proof}
It follows from the same arguments as in \cite[Lemma 6.1]{SV22} but using the bounds given in Lemma \ref{cotasDeltaEspacialY}.
\end{proof}

It is worth mentioning that all these estimates have been thoroughly investigated using the \textit{Zolotarev-Pollard formula for Mittag-Leffler functions} $E_\alpha$, which is valid for the case $0<\alpha<1$ (see \cite[Section 2]{JK19} and \cite[Proposition 8.1.1]{Kol19}). To our knowledge this type of representation has not been explored explicitly in the literature for the case $\alpha >1$, however, we refer the reader to \cite{Baz18} and \cite{BSD20} for the study of evolution equations with a Caputo fractional derivative of order $1<\alpha<2$. 

\begin{definition}
\label{defSolution}
Let $\alpha\in (0,1)$, $\beta\in (0,2)$ and $\gamma>1$. Assume the hypothesis $(\mathcal{H}_1)$ holds. Suppose that $1<p<\infty$ and that $u_0\in L_p(\RR^d)$ is a non-negative function. A function $u$ is called a \textbf{local solution} of \eqref{general}, if there exists $T>0$ such that  
\begin{enumerate}[(i)]
\item $u\in C([0,T];L_p(\RR^d))\cap L_{\infty}((0,T)\times\RR^d)$,
\item $u$ satisfies \eqref{general} in $[0,T]$.
\end{enumerate}
A function $u$ is called a \textbf{global solution} of \eqref{general} if $(i)$-$(ii)$ are satisfied for any $T>0$. We say that $u$ is a \textbf{mild solution} of \eqref{general} if $u\in C([0,T];L_p(\RR^d))\cap L_{\infty}((0,T)\times\RR^d)$ and it satisfies the integral equation
\[
u(t,x) = \int_{\RR^d}Z(t,x-y)u_0(y)dy + \int_0^t \int_{\RR^d}Y(t-s,x-y)|u(s,y)|^{\gamma-1}u(s,y)dyds
\]
for all $x\in\RR^d$ and $0\leq t< T$.
\end{definition}
At this point, we mention that problems like \eqref{general} have been studied in \citep[Section 5]{SV22}. Under suitable conditions on $\alpha$, $\beta$, $\gamma$ and $p$, together with other parameters, the authors find positive, local and global solutions.

\section{Representation of solution in its integral form}
\label{sec:2}
In this section we analyse the conditions under which a local solution $u$ of \eqref{general}, in the sense of Definition \ref{defSolution}, can be represented as 
\begin{equation}
\label{integral}
u(t,x) = \int_{\RR^d}Z(t,x-y)u_0(y)dy + \int_0^t \int_{\RR^d}Y(t-s,x-y)|u(s,y)|^{\gamma-1}u(s,y)dyds
\end{equation}
for all $x\in\RR^d$ and $0\leq t< T$.

Although the subordination principle employed here follows directly from \cite[Chapter 3]{Baz01}, for instance, the point we want to emphasize is the relation \eqref{relacionYZ}, between the fundamental solutions $Z$ and $Y$ in the context of non-Gaussian process, which leads to the main result of this section.

First, we recall that the symbol $\psi(\xi)$ is a continuous and negative definite function. Thereby, from \cite[Example 4.6.29]{Jac01} we know that $(-\Psi_{\beta}(-i\nabla), C_0^{\infty}(\RR^d))$ satisfies, for any $1<p<\infty$, the Dirichlet condition 
\[
\int_{\RR^d}\left(-\Psi_{\beta}(-i\nabla)f\right)(x)\left((f-1)^+\right)^{p-1}(x)dx\leq 0,\quad f\in C_0^{\infty}(\RR^d),
\]
and consequently it is $L_p(\RR^d)$-dissipative (\cite[Propositions 4.6.11 and 4.6.12]{Jac01}). In fact, the density of $C_0^{\infty}(\RR^d)$ in $L_p(\RR^d)$ implies that $(-\Psi_{\beta}(-i\nabla), C_0^{\infty}(\RR^d))$ is closable and its closure $(A, D(A))$ generates a sub-Markovian semigroup $\{T_t\}_{t\geq 0}$ on $L_p(\RR^d)$ which is a strongly continuous contraction semigroup (\cite[Lemma 4.1.36, Theorems 4.1.33 and 4.6.17, Definition 4.1.6]{Jac01}). Besides, $A$ is densely defined on $L_p(\RR^d)$ (\cite[Corollary 4.1.15]{Jac01}). On the other hand, it is well known that $g_\alpha$ is a completely positive function and belongs to $L_{1,loc}(\RR^+)$.
 
We denote $u(t)=u(t,\cdot)$ and $|u|^{\gamma-1}(t)=\abs{u(t,\cdot)}^{\gamma-1}$. Since $u$ and $u_0$ satisfy Definition \ref{defSolution}, if $u_0\in L_\infty(\RR^d)$ we observe that $g_{\alpha}*|u|^{\gamma-1}u(t)\in L_p(\RR^d)$ for $0\leq t<T$. 
Equation \eqref{general} can be written as the Volterra equation
\begin{equation}
\label{Volterra}
u(t)=u_0+g_{\alpha}*|u|^{\gamma-1}u(t)+g_{\alpha}*Au(t), \quad 0<t<T,
\end{equation}   
which admits a resolvent $\{S(t)\}_{t\geq 0}$ in $L_p(\RR^d)$ (\cite[Theorems 4.1 and 4.2]{Pru93}). From \cite[Corollary 4.5]{Pru93} we have that
\[
S(t)=-\int_0^\infty T_\tau \; w(t;d\tau),\quad t>0,
\]
where $w$ is the propagation function associated with $g_\alpha$. In order to describe this resolvent, we use the representation
\[
T_tf(\cdot)=\int_{\RR^d} G(t,\cdot-y)f(y)dy, \quad f\in D(A),
\]
the function $G$ being the fundamental solution of the problem \eqref{HomogeneaOrden1} (see \cite[Section 1.2 Theorem 2.4 (c) and Section 4.1 Theorem 1.3 ]{Paz83}). For $v\in D(A)$ we see that
\begin{align*}
S(t)v&=-\int_0^\infty T_\tau v\; w(t;d\tau)\\
&=-\int_0^\infty G(t,\cdot)\star v\; w(t;d\tau)
\end{align*}
and using the Fourier transform we obtain
\begin{align*}
\mathcal{F}(S(t)v)&=-\int_0^\infty e^{-\tau\psi(\xi)} \widehat{v}\; w(t;d\tau)\\
&=s(t,\psi(\xi))\widehat{v}\\
&=\widehat{Z}(t,\xi)\widehat{v}
\end{align*}
with a kernel $s$ that comes via scalar Volterra equations (see \cite[Proposition 4.9]{Pru93}, \cite[Sections 2 and 3]{PV18}). This implies that
\[
S(t)v=Z(t,\cdot)\star v
\]
and the boundedness of $S(t)$ leads to an extension to all of $L_p(\RR^d)$.

Let $0<t<T$. If $u(s)\in D(A)$, $0\leq s\leq t$, identity \eqref{Volterra} and \cite[Proposition 1.1, Definition 1.3]{Pru93} yield
\begin{align*}
1*u(t)&=\int_0^t u(s)ds\\
&=\int_0^t \left(S(t-s)u(s)-A(g_\alpha * S)(t-s)u(s)\right) ds\\
&=\int_0^t S(t-s)u(s)ds-\int_0^t(g_\alpha * S)(t-s)Au(s)ds\\
&=\int_0^t S(s)u(t-s)ds-\int_0^t S(s)(g_\alpha * Au)(t-s) ds\\
&=\int_0^t S(s)\left(u(t-s)-(g_\alpha * Au)(t-s)\right) ds\\
&=\int_0^t S(s)\left(u_0+g_{\alpha}*|u|^{\gamma-1}u(t-s)\right) ds
\end{align*}
and thus we get the \textit{variation of parameters formula} for \eqref{Volterra} given by
\[
u(t)=\frac{d}{dt}\int_0^t S(s)\left(u_0+g_{\alpha}*|u|^{\gamma-1}u\right)(t-s) ds.
\] 
We note that $$\frac{d}{dt}\int_0^t S(s)u_0ds=S(t)u_0=Z(t,\cdot)\star u_0.$$ By proceeding as in the proof of \cite[Lemma 5.1]{SV22}, but working with the $L_p(\RR^d)$ space, using the relation \eqref{relacionYZ} and the fact that $\underset{0\leq t<T}{\sup}{\left\||u|^{\gamma-1}u(t)\right\|}_\infty<\infty$, we show that
$$\frac{d}{dt}\int_0^t S(s)\left(g_{\alpha}*|u|^{\gamma-1}u\right)(t-s)ds=\int_0^t Y(t-s,\cdot)\star|u|^{\gamma-1}u(s,\cdot)ds.$$

\begin{theorem}
\label{localrepresentation}
Let $\alpha\in (0,1)$ and $\beta\in (0,2)$. Assume the hypothesis $(\mathcal{H}_1)$ holds. Let $\gamma>1$ and suppose that $1<p<\infty$. Let $u_0\in D(A)\cap L_\infty(\RR^d)$ be a non-negative function. If $u$ is a local solution in the sense of Definition \ref{defSolution} for some $T>0$ and $u(t)\in D(A)$ for all $0\leq t<T$, then $u$ admits the representation $\eqref{integral}$.
\end{theorem}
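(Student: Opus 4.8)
The plan is to show that the local solution $u$, which by hypothesis satisfies the Volterra equation \eqref{Volterra} with $u(t)\in D(A)$ for all $0\le t<T$, necessarily coincides with the function given by the right-hand side of \eqref{integral}. Most of the analytic work has already been assembled in the discussion preceding the statement: the closure $(A,D(A))$ of $-\Psi_\beta(-i\nabla)$ generates a sub-Markovian, strongly continuous contraction semigroup $\{T_t\}$ on $L_p(\RR^d)$ with kernel $G$; the kernel $g_\alpha$ is completely positive and locally integrable; hence \eqref{Volterra} admits a resolvent $\{S(t)\}$ with $S(t)v=Z(t,\cdot)\star v$; and the variation-of-parameters computation gives $1*u(t)=\int_0^t S(s)\bigl(u_0+g_\alpha*|u|^{\gamma-1}u\bigr)(t-s)\,ds$. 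So the first step is simply to invoke these facts and reduce the theorem to differentiating this identity in $t$.

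Next I would differentiate the variation-of-parameters formula. Write $F(t):=u_0+g_\alpha*|u|^{\gamma-1}u(t)$, which lies in $L_p(\RR^d)$ for $0\le t<T$ because $u_0\in L_\infty\cap L_p$ and $\sup_{0\le t<T}\||u|^{\gamma-1}u(t)\|_\infty<\infty$ (so the convolution in time of a bounded $L_p$-valued function against $g_\alpha\in L_{1,loc}$ is continuous and $L_p$-valued). Splitting $F$ into its two summands, the term $\frac{d}{dt}\int_0^t S(s)u_0\,ds=S(t)u_0=Z(t,\cdot)\star u_0$ handles the first piece directly, since $s\mapsto S(s)u_0$ is strongly continuous. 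For the second piece, $\frac{d}{dt}\int_0^t S(s)(g_\alpha*|u|^{\gamma-1}u)(t-s)\,ds$, I would reproduce the argument of \cite[Lemma 5.1]{SV22} adapted to $L_p(\RR^d)$: commute the $t$-derivative through using the relation \eqref{relacionYZ}, namely $Y(\cdot,x)=\frac{d}{dt}(g_\alpha*Z(\cdot,x))$, to obtain $\int_0^t Y(t-s,\cdot)\star |u|^{\gamma-1}u(s,\cdot)\,ds$. The bound $\sup_{0\le t<T}\||u|^{\gamma-1}u(t)\|_\infty<\infty$ together with the two-sided estimates for $Y$ in Proposition \ref{cotasY} (which guarantee $Y(t,\cdot)\in L_1(\RR^d)$ locally uniformly in $t$, at least for $d<2\beta$; for the general range one uses $Y(t,\cdot)\star\cdot$ as a bounded operator on $L_p$) ensures the convolutions are well defined and the differentiation under the integral sign is legitimate.

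Combining the two pieces yields exactly
\[
u(t)=Z(t,\cdot)\star u_0 + \int_0^t Y(t-s,\cdot)\star |u|^{\gamma-1}u(s,\cdot)\,ds,
\]
which, written out as integrals over $\RR^d$, is precisely \eqref{integral}; since $u_0\ge 0$ and $u$ is a genuine solution, the identity holds for all $x\in\RR^d$ and $0\le t<T$ after passing from the $L_p$-identity to the pointwise one by continuity (the right-hand side is continuous in $x$ by the spatial regularity estimates, Lemmata \ref{cotasDeltaEspacialZ}--\ref{cotasDeltaEspacialY}). The main obstacle I anticipate is the rigorous justification of the interchange of $\frac{d}{dt}$ with the time-convolution integral in the second term: one must control the boundary behaviour near $s=t$, where the kernel $Y(t-s,\cdot)$ develops a singularity (both the temporal $(t-s)^{\alpha-1}$-type factor and, when $d\ge 2\beta$, a spatial singularity at the origin), and verify that $g_\alpha*Z$ is differentiable in the appropriate ($L_p$- or pointwise) sense — this is exactly where \eqref{relacionYZ} and the careful estimates of Section \ref{sec:1} do the heavy lifting, and why the hypothesis $u(t)\in D(A)$ and $u_0\in D(A)$ are imposed so that every manipulation in the variation-of-parameters chain is valid term by term.
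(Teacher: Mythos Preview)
Your proposal is correct and follows essentially the same route as the paper: the discussion preceding the theorem already assembles the Volterra formulation, the identification $S(t)=Z(t,\cdot)\star\cdot$, and the variation-of-parameters identity, and both you and the paper finish by differentiating that identity, treating the $u_0$-term directly and the nonlinear term via \eqref{relacionYZ} together with the argument of \cite[Lemma~5.1]{SV22} adapted to $L_p(\RR^d)$ under the bound $\sup_{0\le t<T}\||u|^{\gamma-1}u(t)\|_\infty<\infty$. One minor caution: your closing appeal to Lemmata~\ref{cotasDeltaEspacialZ}--\ref{cotasDeltaEspacialY} to pass from the $L_p$-identity to a pointwise one requires hypothesis $(\mathcal{H}_2)$, whereas the theorem assumes only $(\mathcal{H}_1)$; the paper simply leaves the representation at the $L_p$ level and does not make that extra step.
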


\section{Continuity and non-negativeness of solution in $[0,T)\times \RR^d$}
\label{sec:3}
Let $u$ be a local solution of \eqref{general}. In this section we show that $u$ is a continuous and non-negative function on $[0,T)\times \RR^d$, for some $T>0$. For this purpose, the representation \eqref{integral} obtained in the previous section is particularly important. Besides, we need the following technical result.
\begin{lemma}
\label{IdentidadAproximada}
Let $d\in\mathbb{N}$, $\alpha\in (0,1)$ and $\beta\in (0,2)$. Assume the hypothesis $(\mathcal{H}_1)$ holds. If $f$ is a continuous and bounded function on $\RR^d$, then $Z(t,\cdot)\star f\rightarrow f$ uniformly on compact sets whenever $t\rightarrow 0$.  
\end{lemma}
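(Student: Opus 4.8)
The plan is to show that $\{Z(t,\cdot)\}_{t>0}$ is an approximate identity, i.e. that it behaves like a family of mollifiers as $t\to 0^+$. Concretely, I would establish three facts and then run the standard $\varepsilon/3$ argument. First, $\int_{\RR^d} Z(t,x)\,dx = 1$ for every $t>0$: this follows from $\widehat{Z}(t,0) = s(t,\psi(0)) = s(t,0)$ together with the scalar Volterra identity for the kernel $s$ (which gives $s(t,0)=1$, since $\psi(0)=\omega_\mu$ evaluated against $\|\xi\|^\beta=0$ vanishes), or alternatively from $\int_{\RR^d} G(\sigma,x)\,dx = 1$ for all $\sigma>0$ plugged into the subordination formula \eqref{Z:G} and the fact that $\frac1\alpha\int_0^\infty s^{-1-1/\alpha}G_\alpha(1,s^{-1/\alpha})\,ds = 1$. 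Second, $Z(t,x)\ge 0$, which is immediate from \eqref{Z:G} since $G\ge 0$ (it is a transition probability density) and $G_\alpha(1,\cdot)\ge 0$. Third — and this is the quantitative heart of the matter — the mass of $Z(t,\cdot)$ escapes to infinity: for every fixed $\delta>0$,
\[
\int_{\|x\|\ge\delta} Z(t,x)\,dx \longrightarrow 0 \quad\text{as } t\to 0^+.
\]

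This last concentration estimate is where I would use Proposition~\ref{cotasZ}. On the region $\|x\|\ge\delta$ we have $\Omega = \|x\|^\beta t^{-\alpha}\ge \delta^\beta t^{-\alpha}\to\infty$ as $t\to0^+$, so for $t$ small enough the relevant bound is the far-field one, $Z(t,x)\asymp C\, t^{-\alpha d/\beta}\,\Omega^{-1-d/\beta} = C\, t^{-\alpha d/\beta}\,\|x\|^{-\beta-d} t^{\alpha(1+d/\beta)} = C\, t^{\alpha}\,\|x\|^{-\beta-d}$. Integrating over $\{\|x\|\ge\delta\}$ in polar coordinates, $\int_{\|x\|\ge\delta}\|x\|^{-\beta-d}\,dx = c_d\int_\delta^\infty r^{-\beta-d}r^{d-1}\,dr = c_d\,\delta^{-\beta}/\beta < \infty$ since $\beta>0$. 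Hence $\int_{\|x\|\ge\delta} Z(t,x)\,dx \le C\,\delta^{-\beta}\,t^{\alpha}\to 0$. (One must be slightly careful: the estimate of Proposition~\ref{cotasZ} is piecewise, so I would split $\{\|x\|\ge\delta\}$ into $\{\delta\le\|x\|,\ \Omega\le 1\}$ and $\{\Omega\ge 1\}$; but once $t$ is so small that $\delta^\beta t^{-\alpha}\ge 1$ the first piece is empty, so no issue arises in the limit.)

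With these three ingredients the conclusion is routine. Fix a compact set $K\subset\RR^d$ and $\varepsilon>0$. Since $f$ is continuous and bounded, it is uniformly continuous on a bounded neighbourhood of $K$; choose $\delta>0$ so that $\|x-y\|\le\delta$, $x\in K$, implies $|f(x)-f(y)|<\varepsilon$. Then, using $\int Z(t,\cdot)=1$ and $Z(t,\cdot)\ge 0$, write
\[
(Z(t,\cdot)\star f)(x) - f(x) = \int_{\RR^d} Z(t,x-y)\bigl(f(y)-f(x)\bigr)\,dy,
\]
and split the integral into $\|x-y\|\le\delta$ and $\|x-y\|>\delta$. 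The first part is bounded by $\varepsilon\int_{\RR^d}Z(t,x-y)\,dy = \varepsilon$ uniformly in $x\in K$; the second is bounded by $2\|f\|_\infty\int_{\|z\|>\delta} Z(t,z)\,dz$, which by the concentration estimate is $<\varepsilon$ for all $t$ small enough, uniformly in $x\in K$. Hence $\sup_{x\in K}|(Z(t,\cdot)\star f)(x)-f(x)|\le 2\varepsilon$ for $t$ small, proving uniform convergence on $K$. The only genuine obstacle is verifying the total-mass normalization $\int Z(t,\cdot)=1$ cleanly (it is the one fact not contained in Proposition~\ref{cotasZ}), but it follows either from the Fourier-side identity $\widehat Z(t,0)=1$ noted in Section~\ref{sec:2} or directly from the subordination formula \eqref{Z:G}.
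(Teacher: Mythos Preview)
Your proof is correct and follows the same underlying idea as the paper: show that $\{Z(t,\cdot)\}_{t>0}$ is an approximate identity and then run the standard argument. The paper compresses this into one line by invoking the self-similarity relation $Z(t,x)=t^{-\alpha d/\beta}\,Z(1,t^{-\alpha/\beta}x)$ (Formula~(2.20) of \cite{SV22}) to recognise $Z(t,\cdot)$ as the rescaling $g_\epsilon(x)=\epsilon^{-d}g(x/\epsilon)$ of $g:=Z(1,\cdot)$ with $\epsilon=t^{\alpha/\beta}$, and then citing Folland's approximate-identity theorem; you instead verify the three approximate-identity properties (mass one, nonnegativity, concentration at the origin) directly from the subordination formula and the far-field bound in Proposition~\ref{cotasZ}. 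Your route is slightly longer but entirely self-contained, whereas the paper's scaling shortcut offloads the work to \cite{Fol83} and \cite{SV22}.
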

\begin{proof}
From \cite[Lemma 2.12 and Formula (2.20)]{SV22} we know that $g(x):=Z(1,x)$, $x\in\RR^d$, satisfies all assumptions of \cite[Theorem 1.6]{Fol83} with $\epsilon=t^{\frac{\alpha}{\beta}}$.
\end{proof}

In what follows we use the parameter $\kappa:=\begin{cases}\frac{d}{\beta}, & d>\beta,\\ 1, & otherwise \end{cases}$ which sets a condition on $p$ for the existence of some $q\geq 1$ such that
\[
\frac{1}{p}+\frac{1}{q}=1
\]
and the $L_q$-norm for $Y(t,\cdot)$, $t>0$, is reached. Indeed, by choosing $\kappa<p<\infty$ we obtain that $1<q<\infty$ whenever $\kappa=1$ and
$1<q<\frac{d}{d-\beta}$ whenever $\kappa=\frac{d}{\beta}$. This implies that $q<\kappa_2$, with $\kappa_2$ as in \cite[Theorem 2.10]{SV22}.

\begin{theorem}
\label{continuidad}
Let $\alpha\in (0,1)$ and $\beta\in (1,2)$. Assume the hypothesis $(\mathcal{H}_2)$ holds. Let $\gamma>1$ and suppose that $\max\left(1,\kappa\right)<p<\infty$. Let $u_0\in D(A)\cap L_\infty(\RR^d)\cap C(\RR^d)$ be a non-negative function. If $u$ is a local solution in the sense of Definition \ref{defSolution} for some $T>0$ and $u(t)\in D(A)$ for all $0\leq t<T$, then $u\in C([0,T)\times \RR^d)$.  
\end{theorem}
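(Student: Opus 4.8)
The plan is to prove continuity of $u$ on $[0,T)\times\RR^d$ by working with the integral representation \eqref{integral}, which is available by Theorem \ref{localrepresentation}. Write $u=u_1+u_2$, where
\[
u_1(t,x):=\int_{\RR^d}Z(t,x-y)u_0(y)\,dy,\qquad u_2(t,x):=\int_0^t\int_{\RR^d}Y(t-s,x-y)|u(s,y)|^{\gamma-1}u(s,y)\,dy\,ds,
\]
and show each term is continuous on $[0,T)\times\RR^d$; continuity at $t=0$ must be treated separately from continuity at interior times. For the first term at interior times $t>0$, I would split $|u_1(t_1,x_1)-u_1(t_2,x_2)|\leq|u_1(t_1,x_1)-u_1(t_2,x_1)|+|u_1(t_2,x_1)-u_1(t_2,x_2)|$; the temporal increment is controlled by Lemma \ref{cotasDeltaZ} together with $u_0\in L_p(\RR^d)$ via H\"older's inequality (using that the relevant $L_q$ norm of the $Z$-increment kernel is finite under the stated hypotheses on $p$ and $\beta$), and the spatial increment is controlled by Lemma \ref{cotasDeltaEspacialZ} in the same way. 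For continuity at $t=0$, I would invoke Lemma \ref{IdentidadAproximada}: since $u_0\in C(\RR^d)\cap L_\infty(\RR^d)$, $Z(t,\cdot)\star u_0\to u_0$ uniformly on compact sets as $t\to0^+$, which gives joint continuity of $u_1$ at points $(0,x_0)$.

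For the second term $u_2$, the key input is that $|u|^{\gamma-1}u$ is bounded on $(0,T)\times\RR^d$ (by Definition \ref{defSolution}(i)), say $M:=\sup_{0\le s<T}\||u(s)|^{\gamma-1}u(s)\|_\infty<\infty$, so the $\RR^d$-integral reduces to estimating $\|Y(t-s,\cdot)\|_1$ and increments thereof, times $M$. From Proposition \ref{cotasY} one checks $\|Y(\tau,\cdot)\|_1\lesssim \tau^{\alpha-1}$, which is integrable near $\tau=0$, so $u_2$ is well-defined and bounded. For joint continuity of $u_2$ at $(t_0,x_0)$ with $t_0>0$: bound $|u_2(t_1,x_1)-u_2(t_2,x_2)|$ by a spatial-increment piece $\int_0^{t_2}\|Y(t_2-s,\cdot-x_1)-Y(t_2-s,\cdot-x_2)\|_1\,M\,ds$, estimated via Lemma \ref{NormaGradienteY} (which gives a bound $\lesssim\|x_1-x_2\|\,(t_2-s)^{-\alpha/\beta+\alpha-1}$, still integrable since $\beta>1$ forces $\alpha/\beta<\alpha<1$), plus a temporal-increment piece handled by writing $u_2(t_1,x_1)-u_2(t_2,x_1)$ as the sum of $\int_{t_2}^{t_1}\int Y(t_1-s,x_1-y)|u|^{\gamma-1}u\,dy\,ds$ (bounded by $M\int_0^{t_1-t_2}\|Y(\tau,\cdot)\|_1\,d\tau\to0$) and $\int_0^{t_2}\int\big(Y(t_1-s,x_1-y)-Y(t_2-s,x_1-y)\big)|u|^{\gamma-1}u\,dy\,ds$, controlled by Lemma \ref{cotasDeltaY} after integrating the $t_c$-dependent bound in $s$. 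Finally, continuity of $u_2$ at $t=0$ is immediate from $|u_2(t,x)|\le M\int_0^t\|Y(\tau,\cdot)\|_1\,d\tau\lesssim M\,t^\alpha\to0=u_2(0,x)$.

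The main obstacle I expect is bookkeeping the integrability of the time singularities uniformly in the spatial variable and across the regimes $d<2\beta$, $d=2\beta$, $d>2\beta$ that appear in Propositions \ref{cotasY} and the increment lemmata: one must verify that every exponent of $(t-s)$ or $t_c$ arising after applying H\"older in space and then integrating in $s$ stays strictly above $-1$, and that the role of the restriction $\beta\in(1,2)$ (as opposed to $\beta\in(0,2)$ elsewhere) together with $p>\max(1,\kappa)$ is exactly what guarantees the needed $L_q$-norms of $Y(\tau,\cdot)$ and $\nabla Y(\tau,\cdot)$ are finite with an integrable power of $\tau$ — this is where Lemma \ref{NormaGradienteY} and the parameter $\kappa$ discussion preceding the theorem are used. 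A secondary, more routine point is upgrading "continuity in each variable separately plus equicontinuity estimates" to genuine joint continuity, which follows by the standard triangle-inequality argument since all the increment bounds above are uniform on compact subsets of $[0,T)\times\RR^d$.
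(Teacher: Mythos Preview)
Your overall architecture matches the paper's: split $u=u_1+u_2$ via Theorem~\ref{localrepresentation}, handle $t=0$ separately using Lemma~\ref{IdentidadAproximada} for $u_1$ and the bound $|u_2(t,x)|\lesssim M t^{\alpha}$ for $u_2$, and at interior times split each increment into temporal and spatial pieces with the spatial piece of $u_2$ controlled by Lemma~\ref{NormaGradienteY}. The treatment of $u_1$ differs only cosmetically (you pair the kernel increment with $u_0\in L_p$ via H\"older, the paper simply uses $u_0\in L_\infty$ and the $L_1$-norm of the kernel increment); both work.

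There is, however, a genuine gap in your handling of the \emph{temporal} increment of $u_2$. You propose to bound
\[
\int_0^{t_2}\int_{\RR^d}\big|Y(t_1-s,x-y)-Y(t_2-s,x-y)\big|\,|u(s,y)|^{\gamma}\,dy\,ds
\;\le\; M\int_0^{t_2}\big\|Y(t_1-s,\cdot)-Y(t_2-s,\cdot)\big\|_1\,ds
\]
via Lemma~\ref{cotasDeltaY}. But after integrating that pointwise bound over $\RR^d$ one obtains $\|Y(\tau+h,\cdot)-Y(\tau,\cdot)\|_1\lesssim h\,\tau^{\alpha-2}$ (with $h=t_1-t_2$, $\tau=t_2-s$), and the exponent $\alpha-2$ is \emph{below} $-1$; your own check that ``every exponent stays strictly above $-1$'' fails exactly here. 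The fix is a near/far split at $\tau=h$ (triangle inequality for $\tau<h$, the increment bound for $\tau\ge h$), yielding a total of order $h^{\alpha}$ --- this is in fact done later in the paper, in the proof of Theorem~\ref{blowUp}.

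The paper avoids this difficulty altogether by a different device: it changes variable to write $u_2(t,x)=\int_0^t Y(s,\cdot)\star|u|^{\gamma-1}u(t-s)\,ds$, so that the temporal increment lands on $u$, not on $Y$. Then the hypothesis $u\in C([0,T];L_p(\RR^d))$ from Definition~\ref{defSolution}(i) gives $\|u(t-s)-u(t_0-s)\|_p<\epsilon$, and H\"older with $\|Y(s,\cdot)\|_q$ closes the estimate. This is precisely where the restriction $p>\kappa$ is used (so that $\|Y(s,\cdot)\|_q\lesssim s^{-\alpha d/(\beta p)+\alpha-1}$ is integrable). In your outline the $L_p$-continuity of $u$ is never invoked and the role of $p>\kappa$ is not actually pinned down --- both are absorbed by the paper into this single step.
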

\begin{proof}
From Theorem \ref{localrepresentation} it follows that the local solution $u$ has the form
\[
u(t,x) = \int_{\RR^d}Z(t,x-y)u_0(y)dy + \int_0^t \int_{\RR^d}Y(t-s,x-y)|u|^{\gamma-1}u(s,y)dyds,\quad x\in\RR^d,\; 0\leq t< T.
\]
We define
\[
u_1(t,x):= \int_{\RR^d}Z(t,x-y)u_0(y)dy 
\]
and
\[
u_2(t,x):=\int_0^t \int_{\RR^d}Y(t-s,x-y)|u|^{\gamma-1}u(s,y)dyds.
\]
We shall show that for all $\epsilon>0$, there exists $\delta>0$ such that
\[
|u_j(t,x)-u_j(t_0,x_0)|< \epsilon, \forall (t,x)\in B((t_0,x_0),\delta)\subset [0,T)\times \RR^d,
\]
for $j\in\{1,2\}$.

Let $x_0\in\RR^d$ and $0<t_0<T$. We suppose $t_0<t<T$ without loss of generality. For $u_1$ we see that
\begin{align*}
|u_1(t,x)-u_1(t_0,x_0)|&\leq\int_{\RR^d}|Z(t,x-y)-Z(t_0,x_0-y)|u_0(y)dy\\
&\leq\int_{\RR^d}|Z(t,x-y)-Z(t_0,x-y)|u_0(y)dy\\
&~~~+\int_{\RR^d}|Z(t_0,x-y)-Z(t_0,x_0-y)|u_0(y)dy\\
&\lesssim \norm{u_0}_\infty\int_{\RR^d}|Z(t,x-y)-Z(t_0,x-y)|dy\\
&~~~+\norm{u_0}_\infty\int_{\RR^d}|Z(t_0,x-y)-Z(t_0,x_0-y)|dy\\
&\lesssim \norm{u_0}_\infty|t-t_0|t_0^{-1}+\norm{u_0}_\infty\norm{x-x_0}t_0^{-\frac{\alpha}{\beta}},
\end{align*}  
where the last estimates follow from \cite[Theorem 2.13 and Lemma 6.1]{SV22}, respectively. Thus,
\[
|u_1(t,x)-u_1(t_0,x_0)|\lesssim |t-t_0|t_0^{-1}+\norm{x-x_0}t_0^{-\frac{\alpha}{\beta}}
\]
and we can take a ball in $\RR^d$ of radius $C^{-1}\epsilon t_0^{\frac{\alpha}{\beta}}$ centered at $x_0$, and an interval in $[0,T)$ of radius $C^{-1}\epsilon t_0$ centered at $t_0$, where $C$ is the constant of the estimate.

For the continuity of $u_1$ in $(0,x_0)$ we have that
\begin{align*}
|u_1(t,x)-u_1(0,x_0)|&=|u_1(t,x)-u_0(x_0)|\\
&=|u_1(t,x)-u_0(x)+u_0(x)-u_0(x_0)|\\
&\leq |u_1(t,x)-u_0(x)|+|u_0(x)-u_0(x_0)|\\
&=\left|\int_{\RR^d}Z(t,x-y)u_0(y)dy-u_0(x)\right|+|u_0(x)-u_0(x_0)|.
\end{align*}  
We note that, by Lemma \ref{IdentidadAproximada}, the continuity and boundedness of $u_0$ imply the uniform limit on compact subsets of $\RR^d$ for the first term as $t\rightarrow 0$. By choosing a sufficiently small $\delta$ we get the desired result.

Next, we analyse the continuity of $u_2$. We see that
\begin{align*}
|u_2(t,x)|&\leq\int_0^t \int_{\RR^d}Y(t-s,x-y)|u(s,y)|^{\gamma}dyds\\
&\leq\sup_{0\leq s\leq t}\norm{u(s)}_\infty^\gamma\int_0^t \int_{\RR^d}Y(t-s,x-y)dyds\\
&\leq\sup_{0\leq s\leq t}\norm{u(s)}_\infty^\gamma\int_0^t \frac{(t-s)^{\alpha-1}}{\Gamma(\alpha)}ds\\
&\leq\sup_{0\leq s\leq t}\norm{u(s)}_\infty^\gamma\frac{t^{\alpha}}{\Gamma(\alpha+1)}.
\end{align*}
This proves that
\[
\lim_{t\rightarrow 0}u_2(t,x)=0
\]
uniformly on $\RR^d$.

Now, let $x_0\in\RR^d$ and $0<t_0<T$. Again, we suppose $t_0<t<T$ without loss of generality. We find that
\begin{align*}
&|u_2(t,x)-u_2(t_0,x_0)|\\
&\leq |u_2(t,x)-u_2(t_0,x)|+|u_2(t_0,x)-u_2(t_0,x_0)|\\
&\leq\int_0^{t_0}\int_{\RR^d}Y(s,x-y)\left||u|^{\gamma-1}u(t-s,y)-|u|^{\gamma-1}u(t_0-s,y)\right|dy ds\\
&~~~+\int_{t_0}^{t}\int_{\RR^d}Y(s,x-y)|u(t-s,y)|^\gamma dy ds\\
&~~~+\int_0^{t_0}\int_{\RR^d}|Y(t_0-s,x-y)-Y(t_0-s,x_0-y)||u(s,y)|^\gamma dy ds\\
&\lesssim\gamma \sup_{0\leq s\leq t}\norm{u(s)}_\infty^{\gamma-1}\int_0^{t_0} \int_{\RR^d}Y(s,x-y)|u(t-s,y)-u(t_0-s,y)|dy ds\\
&~~~+\sup_{0\leq s\leq t}\norm{u(s)}_\infty^\gamma\int_{t_0}^{t}\int_{\RR^d}Y(s,x-y)dy ds\\
&~~~+\sup_{0\leq s\leq t}\norm{u(s)}_\infty^\gamma\int_0^{t_0}\int_{\RR^d}|Y(t_0-s,x-y)-Y(t_0-s,x_0-y)| dy ds\\
&\lesssim\gamma \sup_{0\leq s\leq t}\norm{u(s)}_\infty^{\gamma-1}\int_0^{t_0}\left\|Y(s,\cdot)\star|u(t-s)-u(t_0-s)|\right\|_{\infty}ds\\
&~~~+\sup_{0\leq s\leq t}\norm{u(s)}_\infty^\gamma\int_{t_0}^{t}s^{\alpha-1} ds\\
&~~~+\sup_{0\leq s\leq t}\norm{u(s)}_\infty^\gamma\int_0^{t_0}\norm{x-x_0}(t_0-s)^{-\frac{\alpha}{\beta}+\alpha-1} ds,
\end{align*}
where the last integral is estimated by Lemma \ref{NormaGradienteY}. For estimating the first term, we use the continuity of $u$ with respect to the norm topology on $L_p(\RR^d)$ and Young's convolution inequality, i.e.,
\begin{align*}
\int_0^{t_0}\left\|Y(s,\cdot)\star|u(t-s)-u(t_0-s)|\right\|_{\infty}ds&\lesssim\int_0^{t_0}\norm{Y(s,\cdot)}_q\norm{u(t-s)-u(t_0-s)}_p\;ds\\
&\lesssim \epsilon\int_0^{t_0}s^{-\frac{\alpha d}{\beta p}+\alpha-1}ds.
\end{align*}  
Thus,
\[
|u_2(t,x)-u_2(t_0,x_0)|\lesssim \epsilon t_0^{\alpha-\frac{\alpha d}{\beta p}}+(t^{\alpha}-t_0^{\alpha})+\norm{x-x_0}t_0^{\alpha-\frac{\alpha}{\beta}}.
\]
\end{proof}
The second result of this section is the following.
\begin{theorem}
\label{positividad}
Let $\alpha\in (0,1)$ and $\beta\in (0,2)$. Assume the hypothesis $(\mathcal{H}_1)$ holds. Let $\gamma>1$ and suppose that $1<p<\infty$. Let $u_0\in D(A)\cap L_\infty(\RR^d)$ be a non-negative function. If $u$ is a local solution in the sense of Definition \ref{defSolution} for some $T>0$ and $u(t)\in D(A)$ for all $0\leq t<T$, then there exists $0<T^*\leq T$ such that $u$ is non-negative in $[0,T^*)\times \RR^d$.  
\end{theorem}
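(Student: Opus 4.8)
The plan is to show that the negative part $u^-(t,x):=\max\{-u(t,x),0\}$ of the local solution satisfies a singular Gronwall (Henry type) inequality with vanishing forcing term, and hence vanishes identically; the non-negativity of $u$ then follows — in fact on all of $[0,T)\times\RR^d$, so that one may even take $T^*=T$.

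First I would invoke Theorem \ref{localrepresentation} to write $u=u_1+u_2$, where
\[
u_1(t,x)=\int_{\RR^d}Z(t,x-y)u_0(y)\,dy,\qquad u_2(t,x)=\int_0^t\int_{\RR^d}Y(t-s,x-y)\,|u|^{\gamma-1}u(s,y)\,dy\,ds .
\]
By Propositions \ref{cotasZ} and \ref{cotasY} the kernels $Z$ and $Y$ are strictly positive on $(0,\infty)\times\RR^d$. Since $u_0\geq 0$, this gives $u_1(t,x)\geq 0$ for every $(t,x)$, so $u^-(t,x)\leq\max\{-u_2(t,x),0\}$. Writing $|u|^{\gamma-1}u=(u^+)^\gamma-(u^-)^\gamma\geq-(u^-)^\gamma$ pointwise and using $Y\geq 0$, I obtain the key inequality
\[
u^-(t,x)\leq\int_0^t\int_{\RR^d}Y(t-s,x-y)\,(u^-(s,y))^\gamma\,dy\,ds,\qquad(t,x)\in[0,T)\times\RR^d .
\]

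Next, set $M:=\norm{u}_{L_\infty((0,T)\times\RR^d)}<\infty$, which is finite because $u$ is a local solution and $u_0\in L_\infty(\RR^d)$. Since $0\leq u^-\leq M$ a.e., we have $(u^-)^\gamma\leq M^{\gamma-1}u^-$ a.e., so the inequality above becomes $u^-(t,x)\leq M^{\gamma-1}\int_0^t (Y(t-s,\cdot)\star u^-(s))(x)\,ds$. Taking $L_p(\RR^d)$-norms and using Minkowski's integral inequality, Young's convolution inequality, and the mass identity $\norm{Y(\tau,\cdot)}_1=\widehat Y(\tau,0)=g_\alpha(\tau)$ (which follows from $\widehat Z(\tau,0)=1$ together with \eqref{relacionYZ}, and is exactly the bound already used in the proof of Theorem \ref{continuidad}), one gets, for $v(t):=\norm{u^-(t)}_p$,
\[
v(t)\leq M^{\gamma-1}\,(g_\alpha*v)(t),\qquad 0\leq t<T .
\]
Since $t\mapsto u^-(t)=\frac{1}{2}(|u(t)|-u(t))$ is continuous from $[0,T]$ into $L_p(\RR^d)$, the function $v$ is continuous, hence bounded on $[0,T]$. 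Iterating the last inequality and using the semigroup identity $g_a*g_b=g_{a+b}$ yields $v\leq (M^{\gamma-1})^n\,g_{n\alpha}*v\leq (M^{\gamma-1})^n\,\norm{v}_{C[0,T]}\,g_{n\alpha+1}$, and $g_{n\alpha+1}(t)=t^{n\alpha}/\Gamma(n\alpha+1)\to 0$ as $n\to\infty$ for each fixed $t$ (equivalently, one quotes the generalized Gronwall lemma). Hence $v\equiv 0$ on $[0,T)$, i.e. $u(t,\cdot)\geq 0$ a.e. for every $t\in[0,T)$.

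Finally I would bootstrap: for each $s$ we now have $|u|^{\gamma-1}u(s,\cdot)=u(s,\cdot)^\gamma\geq 0$ a.e., so the integrand defining $u_2$ is non-negative a.e. and, since $Y\geq 0$, $u_2(t,x)\geq 0$ for all $(t,x)$; combined with $u_1\geq 0$ this gives $u(t,x)\geq 0$ on all of $[0,T)\times\RR^d$, so the statement holds with $T^*=T$. The step requiring the most care is the reduction to the scalar inequality — tracking the almost-everywhere versus everywhere distinction, justifying the interchange of $L_p$-norm and time integration via Minkowski/Tonelli, and verifying $\norm{Y(\tau,\cdot)}_1=g_\alpha(\tau)$ — whereas the Gronwall iteration and the strict positivity of $Z$ and $Y$ (read off from the two-sided estimates of Propositions \ref{cotasZ} and \ref{cotasY}) are routine.
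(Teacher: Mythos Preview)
Your argument is correct and in fact yields the stronger conclusion $T^*=T$. Every step checks: the strict positivity of $Z$ and $Y$ follows from the two-sided estimates in Propositions \ref{cotasZ} and \ref{cotasY}; the sign decomposition $|u|^{\gamma-1}u=(u^+)^\gamma-(u^-)^\gamma$ and the bound $(u^-)^\gamma\le M^{\gamma-1}u^-$ are elementary; the mass identity $\norm{Y(\tau,\cdot)}_1=g_\alpha(\tau)$ is exactly what is used in the proof of Theorem \ref{continuidad}; and the iteration of $v\le M^{\gamma-1}(g_\alpha*v)$ with continuous $v$ is the standard singular Gronwall argument. The bootstrap from a.e.\ to everywhere non-negativity via the integral representation is also fine.

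The paper, however, takes a quite different route. It builds a monotone approximation scheme in the spirit of Aguirre--Escobedo: it truncates the nonlinearity $r\mapsto r^\gamma$ to globally Lipschitz functions $g_n$, produces by a contraction argument a decreasing family of non-negative fixed points $u_n$ with perturbed data $u_0+\tfrac{1}{n}$, uses a Gronwall/Mittag-Leffler bound to force $u_n\le N$ on a possibly small interval $[0,T^*)$ so that $g_n(u_n)=u_n^\gamma$ there, passes to the limit $\widetilde u=\lim u_n\ge 0$, and finally identifies $\widetilde u$ with $u$ by another Gronwall step. Your approach is shorter, avoids the truncation and comparison machinery entirely, and does not lose the full interval $[0,T)$; the paper's scheme, on the other hand, is a comparison-principle template that transfers readily to more general nonlinearities or to settings where a direct inequality for $u^-$ is not available.
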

\begin{proof}
We define the operator
\[
\mathcal{M}v(t,x):=\int_{\RR^d}Z(t,x-y)v_0(y)dy + \int_0^t \int_{\RR^d}Y(t-s,x-y)g(v(s,y))dyds
\]
on the Banach space $L_{\infty}((0,T)\times\RR^d)$, where $g$ is a non-decreasing Lipschitz function with $g(0) = 0$ and $v_0\in L_\infty(\RR^d)$. As in the proof of \cite[Lemma 1.3]{AE87}, we derive that the operator $\mathcal{M}$ has a unique fixed point $v$. Furthermore, $v\geq w$ whenever $v_0\geq w_0$, where $w$ is the fixed point associated with $w_0\in L_\infty(\RR^d)$. Our aim now is to apply this result to a sequence of functions $g_n$, such that for each $n\in\mathbb{N}$ they have the same properties as $g$ but with the additional constraint that their structure approximates the non-linear term $(\cdot)^\gamma$ on $[0,\infty)$. In accordance with our particular situation with $\gamma >1$, we need a sequence that allows us to control the derivative of the function $(\cdot)^\gamma$. For that purpose, we define
\[
g_n(r):=\begin{cases} 0 &\text{if }\quad r<0,\\ r^{\gamma} &\text{if }\quad 0\leq r\leq n,\\ a_n-b_ne^{-r} &\text{if }\quad r>n,\\ 
\end{cases}
\]
where $a_n,b_n$ are positive constants that guarantee the existence of $g_n'\geq 0$ on $\RR$ a.e. By construction we have that for all $n\in\mathbb{N}$ the constant Lipschitz of $g_n$ is $\gamma n^{\gamma-1}$, $g_n(0)=0$ and $g_n(r)=r^{\gamma}$ for $0\leq r\leq n$. Therefore, there exists a unique function $u_n\in L_{\infty}((0,T)\times\RR^d)$ such that $0\leq u_n$ and
\[
u_n(t,x)=\int_{\RR^d}Z(t,x-y)\left(u_0+\frac{1}{n}\right)(y)dy + \int_0^t \int_{\RR^d}Y(t-s,x-y)g_n(u_n(s,y))dyds,
\]
for $x\in\RR^d$ and $0<t<T$. Since $\frac{1}{n}\geq\frac{1}{n+1}$, we have that $u_{n+1}\leq u_{n}$. Thus, for almost every $(t,x)\in (0,T)\times\RR^d$, the sequence of real numbers $(u_n(t,x))_{n\in\mathbb{N}}$ is decreasing and bounded from below by zero. Consequently, we can define the function
\[
\widetilde{u}(t,x)=\lim_{n\rightarrow\infty}u_n(t,x)
\]
a.e. in $(0,T)\times\RR^d$. On the other hand, we have that
\[
\norm{u_n(t)}_\infty\leq\left\|u_0+\frac{1}{n}\right\|_\infty + \frac{\gamma n^{\gamma-1}}{\Gamma(\alpha)}\int_0^t (t-s)^{\alpha-1}\norm{u_n(s)}_{\infty}ds
\]
and Gronwall's inequality (see \cite[Corollary 2]{YGD07}) yields
\begin{align*}
\norm{u_n(t)}_\infty&\leq\left\|u_0+\frac{1}{n}\right\|_\infty E_{\alpha,1}\left(\gamma n^{\gamma-1}t^{\alpha}\right)\\
&\leq\left\|u_0+\frac{1}{n}\right\|_\infty E_{\alpha,1}\left(\gamma n^{\gamma-1}T^{\alpha}\right),\quad 0<t<T.
\end{align*}
Now, for small enough $0<T^*\leq T$ we can find $N\in\mathbb{N}$ such that 
\[
\left\|u_0+\frac{1}{N}\right\|_\infty E_{\alpha,1}\left(\gamma N^{\gamma-1}(T^*)^{\alpha}\right)\leq N.
\]
Therefore, for all $n\geq N$ it follows that $u_n(t,x)\leq N$, for $x\in\RR^d$ and $0<t<T^*$. This shows that
\[
u_n(t,x)=\int_{\RR^d}Z(t,x-y)\left(u_0+\frac{1}{n}\right)(y)dy + \int_0^t \int_{\RR^d}Y(t-s,x-y)u_n(s,y)^{\gamma}dyds,\quad n\geq N.
\]
We note that the non-linear integral term is dominated by $N^{\gamma}$ and the dominated convergence theorem implies that
\[
\widetilde{u}(t,x)=\int_{\RR^d}Z(t,x-y)u_0(y)dy + \int_0^t \int_{\RR^d}Y(t-s,x-y)\widetilde{u}(s,y)^{\gamma}dyds.
\]
Next, we show that $u=\widetilde{u}$ a.e. in $(0,T^*)$. Indeed,\\
\begin{align*}
|u(t,x)-\widetilde{u}(t,x)|&\leq \int_0^t \int_{\RR^d}Y(t-s,x-y)\left||u|^{\gamma-1}u(s,y)-\widetilde{u}(s,y)^{\gamma}\right|dyds\\
&=\int_0^t \int_{\RR^d}Y(t-s,x-y)\left||u|^{\gamma-1}u(s,y)-|\widetilde{u}|^{\gamma-1}\widetilde{u}(s,y)\right|dyds\\
&\lesssim\sup_{0\leq s<T^*}\left(\norm{u(s)}_\infty^{\gamma-1}+\norm{\widetilde{u}(s)}_\infty^{\gamma-1}\right) \int_0^t \int_{\RR^d}Y(t-s,x-y)|u(s,y)-\widetilde{u}(s,y)|dyds\\
&\leq C(T^*) \int_0^t \int_{\RR^d}Y(t-s,x-y)\norm{u(s)-\widetilde{u}(s)}_\infty dyds\\
&\leq C(T^*) \int_0^t\frac{(t-s)^{\alpha-1}}{\Gamma(\alpha)}\norm{u(s)-\widetilde{u}(s)}_\infty ds
\end{align*}
and thus
\[
\norm{u(t)-\widetilde{u}(t)}_\infty \leq \frac{C(T^*)}{\Gamma(\alpha)}\int_0^t(t-s)^{\alpha-1}\norm{u(s)-\widetilde{u}(s)}_\infty ds.
\]
By Gronwall's inequality we conclude the desired result.
\end{proof}

\section{Proof of the main result Theorem \ref{blowUp}}
\label{sec:4}
Firstly, we get the following estimates. Let $t>0$. Using the bounds given in Proposition \ref{cotasZ}, it is clear that 
\[
Z(t,x-y)\geq C t^{-\frac{\alpha d}{\beta}} e^{-\frac{\norm{x-y}^2}{4t}},\quad \Omega\leq 1.
\]
If $\Omega\geq 1$, we have that
\begin{align*}
Z(t,x-y)&\geq C t^{-\frac{\alpha d}{\beta}}\Omega^{-1-\frac{d}{\beta}}\\
&= C t^{-\frac{\alpha d}{\beta}}t^{\alpha+\frac{\alpha d}{\beta}}\norm{x-y}^{-\beta-d}\\
&= C t^{-\frac{\alpha d}{\beta}}t^{\alpha+\frac{\alpha d}{\beta}}(2\sqrt{t})^{-\beta-d}\left(\frac{\norm{x-y}}{2\sqrt{t}}\right)^{-\beta-d}\\
&\geq C t^{-\frac{\alpha d}{\beta}}t^{\alpha+\frac{\alpha d}{\beta}}(2\sqrt{t})^{-\beta-d}e^{-\frac{\norm{x-y}^2}{4t}},
\end{align*}
whenever $d\leq 3$. For larger dimensions, it is always possible to find a suitable constant $K>1$, depending on $\beta$ and $d$, such that $\left(\frac{\norm{x-y}}{2\sqrt{t}}\right)^{-\beta-d}\geq e^{-K\frac{\norm{x-y}^2}{4t}}$. From the hypothesis $\alpha=\frac{\beta}{2}$, it follows that
\[
Z(t,x-y)\geq C 2^{-\beta-d} t^{-\frac{\alpha d}{\beta}} e^{-\frac{\norm{x-y}^2}{4t}},\quad \Omega\geq 1,
\]
which means that
\begin{equation}
\label{Zvscampana}
Z(t,x-y)\geq C_1 t^{-\frac{\alpha d}{\beta}} e^{-\frac{\norm{x-y}^2}{4t}},
\end{equation}
for all $t>0$ and $x,y\in\RR^d$, with $C_1=\dfrac{C}{2^{\beta+d}}$.

We may assume without loss of generality that the constant $C$ of the Proposition \ref{cotasY} is the same as that of the Proposition \ref{cotasZ}. In this way, we have also derived
\begin{equation}
\label{Yvscampana}
Y(t-s,x-y)\geq C_1 (t-s)^{-\frac{\alpha d}{\beta}+\alpha-1} e^{-\frac{\norm{x-y}^2}{4(t-s)}},
\end{equation}
for all $0\leq s < t$ and $x,y\in\RR^d$. 

Now, we proceed by contradiction. We suppose that there exists a global non-trivial solution $u$ of \eqref{general}, according to Definition \ref{defSolution}. In this case, $u_0(y_0)>0$ for some $y_0\in\RR^d$. The continuity of $u_0$ implies that
\[
u_0(y)> C_0, \quad \forall y\in B(y_0,\delta),
\] 
with some $\delta >0$ and $C_0=\dfrac{u_0(y_0)}{2}$.

The representation \eqref{integral} for $u$ is
\[
u(t,x) = \int_{\RR^d}Z(t,x-y)u_0(y)dy + \int_0^t \int_{\RR^d}Y(t-s,x-y)u(s,y)^{\gamma}dyds
\]
for all $x\in\RR^d$ and $0<t< T$. We note that, given the assumption made, $T$ can be arbitrarily large. As in Section \ref{sec:3}, we define
\[
u_1(t,x):= \int_{\RR^d}Z(t,x-y)u_0(y)dy 
\]
and
\[
u_2(t,x):=\int_0^t \int_{\RR^d}Y(t-s,x-y)u(s,y)^{\gamma}dyds.
\]
Using \eqref{Zvscampana}, it follows that
\begin{align*}
u_1(t,x)&\geq C_1 t^{-\frac{\alpha d}{\beta}} \int_{\RR^d}e^{-\frac{\norm{x-y}^2}{4t}} u_0(y)dy\\
&\geq C_1C_0 t^{-\frac{\alpha d}{\beta}}\int_{B(y_0,\delta)}e^{-\frac{\norm{x-y}^2}{4t}}dy\\
&\geq C_1C_0 t^{-\frac{\alpha d}{\beta}}e^{-\frac{\norm{x-y_0}^2}{2t}}\int_{B(y_0,\delta)}e^{-\frac{\norm{y-y_0}^2}{2t}}dy
\end{align*}
and we obtain
\begin{equation}
\label{EstiPartLineal}
u_1(t,x)\geq C_2 t^{-\frac{\alpha d}{\beta}}e^{-\frac{\norm{x}^2}{t}}, \quad t>1,\quad x\in\RR^d.
\end{equation}
Let $H$ be the heat kernel 
\[
H(t,x)=\frac{1}{(4\pi t)^{\frac{d}{2}}}e^{-\frac{\norm{x}^2}{4t}},\quad t>0,\quad x\in\RR^d.
\]
Using the fact that
\[
\int_{\RR^d}H(t,x)dx=1,
\]
we define the function
\begin{equation}
\label{F}
F(t)=\int_{\RR^d}H(t,x)u(t,x)dx,\quad t>0,
\end{equation}
and splitting the integral into two parts we see that
\[
F(t)=\int_{\RR^d}H(t,x)u_1(t,x)dx+\int_{\RR^d}H(t,x)u_2(t,x)dx.
\]
In the first integral we use the estimate \eqref{EstiPartLineal}, for obtaining
\[
F(t)\geq C_3 t^{-\frac{\alpha d}{\beta}}+\int_{\RR^d}H(t,x)u_2(t,x)dx
\]
whenever $t>1$. 

In the second integral, we use the fact that (see \cite[Theorem 2.14]{SV22})
\[
\frac{1}{g_\alpha(t)}\displaystyle\int_{\RR^d}Y(t,x)dx=1,\quad t>0.
\]
Jensen's inequality and Fubini's theorem yield 
\begin{align*}
&\int_{\RR^d}H(t,x)u_2(t,x)dx\\
&=\int_{\RR^d}H(t,x)\left[\int_0^t \int_{\RR^d}Y(t-s,x-y)u(s,y)^{\gamma}dy ds\right] dx\\
&=\int_0^t g_\alpha(t-s)\int_{\RR^d}H(t,x)\left[\int_{\RR^d}\frac{1}{g_\alpha(t-s)} Y(t-s,x-y)u(s,y)^{\gamma}dy\right] dx ds\\
&\geq \int_0^t g_\alpha(t-s)\int_{\RR^d}H(t,x)\left[\int_{\RR^d} \frac{1}{g_\alpha(t-s)} Y(t-s,x-y)u(s,y)dy\right]^{\gamma} dx ds\\
&= \int_0^t \left(g_\alpha(t-s)\right)^{1-\gamma}\int_{\RR^d}H(t,x)\left[\int_{\RR^d}Y(t-s,x-y)u(s,y)dy\right]^{\gamma} dx ds\\
&\geq \int_0^t \left(g_\alpha(t-s)\right)^{1-\gamma}\left[\int_{\RR^d}H(t,x)\int_{\RR^d}Y(t-s,x-y)u(s,y)dy\; dx\right]^{\gamma} ds\\
&\geq \int_0^t \left(g_\alpha(t-s)\right)^{1-\gamma}\left\{\int_{\RR^d}\left[\int_{\RR^d}H(t,x)Y(t-s,x-y)dx \right] u(s,y)dy\right\}^{\gamma} ds.
\end{align*}
The expression in the square brackets can be estimated with \eqref{Yvscampana}, i.e.,
\begin{align*}
&\int_{\RR^d}H(t,x)Y(t-s,x-y)dx\\
&\geq C_1  (t-s)^{-\frac{\alpha d}{\beta}+\alpha-1} \int_{\RR^d}H(t,x)e^{-\frac{\norm{x-y}^2}{4(t-s)}}dx\\
&= C_1 (4\pi s)^{-\frac{d}{2}}e^{-\frac{\norm{y}^2}{4s}}\left(\frac{s}{t}\right)^{\frac{d}{2}} (t-s)^{-\frac{\alpha d}{\beta}+\alpha-1} \int_{\RR^d}e^{\frac{\norm{y}^2}{4s}-\frac{\norm{x}^2}{4t}-\frac{\norm{x-y}^2}{4(t-s)}}dx. 
\end{align*}
Proceeding in the same way as in \cite[page 42]{Bei11}, with $\alpha=\frac{\beta}{2}$, we get
\[
\int_{\RR^d}H(t,x)Y(t-s,x-y)dx \geq C_4 (4\pi s)^{-\frac{d}{2}}e^{-\frac{\norm{y}^2}{4s}}\left(\frac{s}{t}\right)^{\frac{d}{2}} (t-s)^{\alpha-1}
\]
and thus
\begin{align*}
&\left\{\int_{\RR^d}\left[\int_{\RR^d}H(t,x)Y(t-s,x-y)dx \right] u(s,y)dy\right\}^{\gamma}\\ 
&\geq C_4^\gamma (t-s)^{(\alpha-1)\gamma} \left(\frac{s}{t}\right)^{\frac{d}{2}\gamma}\left\{\int_{\RR^d}(4\pi s)^{-\frac{d}{2}}e^{-\frac{\norm{y}^2}{4s}}u(s,y)dy\right\}^\gamma\\ 
&=C_4^\gamma(t-s)^{(\alpha-1)\gamma} \left(\frac{s}{t}\right)^{\frac{d}{2}\gamma}F^\gamma(s)
\end{align*}
for $0<s<t$.
It follows that
\[
\int_{\RR^d}H(t,x)u_2(t,x)dx\geq C_4^\gamma\int_0^t \left(g_\alpha(t-s)\right)^{1-\gamma}(t-s)^{(\alpha-1)\gamma} \left(\frac{s}{t}\right)^{\frac{d}{2}\gamma}F^\gamma(s) ds
\]
and hence
\begin{equation*}
F(t) \geq \frac{C_3}{t^{\frac{d}{2}}}+C_5 \frac{t^{\alpha-1}}{t^{\frac{d}{2}\gamma}}\int_0^t s^{\frac{d}{2}\gamma}F^\gamma(s) ds
\end{equation*}
for all $t>1$. Consequently,
\begin{equation}
\label{FacotadaCF}
t^{\frac{d}{2}\gamma}t^{1-\alpha} F(t) \geq C_3 t^{\frac{d}{2}(\gamma-1)}t^{1-\alpha}+C_5 \int_0^t s^{\frac{d}{2}\gamma}F^\gamma(s) ds.
\end{equation}
Defining the r.h.s. of this expression as $f(t)$, $t>1$, we have that
\begin{equation}
\label{cotaf}
f(t)\geq C_3 t^{\frac{d}{2}(\gamma-1)}t^{1-\alpha}
\end{equation}
and that
\begin{equation}
\label{cotaf'}
f'(t)\geq C_5 t^{\frac{d}{2}\gamma}F^\gamma(t).
\end{equation}
From \eqref{FacotadaCF} it follows that
\begin{align*}
f'(t)&\geq C_5 t^{\frac{d}{2}\gamma}\left(\frac{f(t)}{t^{\frac{d}{2}\gamma+1-\alpha}}\right)^\gamma\\
&=C_5 t^{\frac{d}{2}\gamma(1-\gamma)-(1-\alpha)\gamma}f^\gamma(t).
\end{align*}
Therefore,
\[
f'(t)f^{-\gamma}(t)\geq C_5 t^{\frac{d}{2}\gamma(1-\gamma)-(1-\alpha)\gamma}
\]
and
\[
\int_t^T f'(s)f^{-\gamma}(s)ds \geq C_5\int_t^T s^{\frac{d}{2}\gamma(1-\gamma)-(1-\alpha)\gamma}ds
\]
with $T>t$. From here, we get that
\[
\frac{f^{1-\gamma}(t)}{\gamma-1}\geq C_5\int_t^T s^{\frac{d}{2}\gamma(1-\gamma)-(1-\alpha)\gamma}ds
\]
and using \eqref{cotaf} we also obtain the estimate
\[
\frac{f^{1-\gamma}(t)}{\gamma-1}\leq\frac{C_3^{1-\gamma}}{\gamma-1}t^{-\frac{d}{2}(1-\gamma)^2-(1-\alpha)(\gamma-1)}.
\]
This implies that
\begin{equation}
 \label{exponentes}
\frac{C_3^{1-\gamma}}{\gamma-1}t^{-\frac{d}{2}(1-\gamma)^2-(1-\alpha)(\gamma-1)}\geq C_5\int_t^T s^{-\frac{d}{2}\gamma(\gamma-1)-(1-\alpha)\gamma}ds.
 \end{equation}
Next we analyse the r.h.s. of \eqref{exponentes}, according to the following cases with $a:=d-2(1-\alpha)$.\\

For the case $1<\gamma\leq \frac{a}{d}+\frac{2}{d\gamma}$, we have 
\begin{align*}
\gamma\leq \frac{a}{d}+\frac{2}{d\gamma}&\Rightarrow d\gamma^2\leq a\gamma+2\\
&\Leftrightarrow d\gamma^2+2(1-\alpha)\gamma-d\gamma-2\leq 0\\
&\Leftrightarrow -\frac{d\gamma}{2}(\gamma-1)-(1-\alpha)\gamma+1\geq 0,
\end{align*}
which yields a contradiction for large enough $T$.

For the case $\frac{a}{d}+\frac{2}{d\gamma}<\gamma< \frac{a}{d}+\frac{2}{d}$, we write the expression \eqref{exponentes} as 
\[
\frac{C_3^{1-\gamma}}{\gamma-1}t^{-\frac{d}{2}(1-\gamma)^2-(1-\alpha)(\gamma-1)}\geq C_5\frac{t^{-\frac{d}{2}\gamma(\gamma-1)-(1-\alpha)\gamma+1}-T^{-\frac{d}{2}\gamma(\gamma-1)-(1-\alpha)\gamma+1}}{\frac{d}{2}\gamma(\gamma-1)+(1-\alpha)\gamma-1}.
\]
Besides
\begin{align*}
\gamma<\frac{a}{d}+\frac{2}{d}&\Rightarrow d\gamma<d-2(1-\alpha)+2\\
&\Leftrightarrow -1<-\frac{d}{2}(\gamma-1)-(1-\alpha)\\
&\Leftrightarrow \frac{d\gamma}{2}(\gamma-1)+(1-\alpha)\gamma-1<\frac{d}{2}(\gamma-1)^2+(1-\alpha)(\gamma-1),
\end{align*}
which is a contradiction for large enough $t$ and $T\rightarrow \infty$.

For the critical case $\gamma=1+\frac{\beta}{d}$, we use the facts that 
\[
u(t,x)^\gamma\geq u_1(t,x)^\gamma
\]
and
\[
u(t,x)\geq u_2(t,x),
\]
together with the estimates \eqref{Yvscampana} and \eqref{EstiPartLineal}. Therefore, for $t>2$, we get
\begin{align*}
u(t,x)&\geq \int_1^{\frac{t}{2}} \int_{\RR^d}Y(t-s,x-y)u(s,y)^{\gamma}dyds\\
&\geq C_1 C_2^\gamma\int_1^{\frac{t}{2}}(t-s)^{-\frac{\alpha d}{\beta}+\alpha-1}\int_{\RR^d} e^{-\frac{\norm{x-y}^2}{4(t-s)}} s^{-\frac{\alpha d \gamma}{\beta}}e^{-\frac{\gamma\norm{y}^2}{s}}dyds\\
&=\frac{C_1 C_2^\gamma}{t^{\frac{d}{2}}}e^{-\frac{\norm{x}^2}{t}}\int_1^{\frac{t}{2}}\frac{(t-s)^{\alpha-1}t^{\frac{d}{2}}}{(t-s)^{\frac{d}{2}}s^{\frac{d+\beta}{2}}}\int_{\RR^d} e^{\frac{\norm{x}^2}{t}-\frac{\norm{x-y}^2}{4(t-s)}-\frac{\gamma\norm{y}^2}{s}}dyds\\
&\geq\frac{C_1 C_2^\gamma}{t^{\frac{d}{2}}}e^{-\frac{\norm{x}^2}{t}}\int_1^{\frac{t}{2}}\frac{(t-s)^{\alpha-1}t^{\frac{d}{2}}}{(t-s)^{\frac{d}{2}}s^{\frac{d}{2}+\alpha}}\int_{\RR^d} e^{\frac{\norm{x}^2}{t}-\frac{\norm{x}^2}{t}-\frac{\norm{y}^2}{2(t-s)}-\frac{\gamma\norm{y}^2}{s}}dyds\\
&\geq\frac{C_6}{t^{\frac{d}{2}}}e^{-\frac{\norm{x}^2}{t}}\int_1^{\frac{t}{2}}\frac{(t-s)^{\alpha-1}}{s^{\alpha}}ds\\
&\geq \frac{C_6}{t^{\frac{d}{2}}}e^{-\frac{\norm{x}^2}{t}}\int_1^{\frac{t}{2}}\frac{1}{t-s}ds
\end{align*}  
and hence
\[
u(t,x)\geq \frac{C_6}{t^{\frac{d}{2}}}e^{-\frac{\norm{x}^2}{t}}\ln\left(2-\frac{2}{t}\right).
\]
Using this and \eqref{F}, we obtain that
\begin{equation}
\label{nuevacota}
F(t)\geq \frac{C_7}{t^{\frac{d}{2}}}\ln\left(2-\frac{2}{t}\right).
\end{equation}
Now,
\begin{align*}
t^{\frac{d}{2}\gamma}t^{1-\alpha}  F(t)&=\frac{1}{2}t^{\frac{d}{2}\gamma}t^{1-\alpha} F(t)+\frac{1}{2}t^{\frac{d}{2}\gamma}t^{1-\alpha} F(t)\\
&\geq \frac{C_7}{2}\frac{t^{\frac{d}{2}\gamma}t^{1-\alpha}}{t^{\frac{d}{2}}}\ln\left(2-\frac{2}{t}\right)+\frac{C_5}{2}\int_0^t s^{\frac{d}{2}\gamma}F^\gamma(s) ds,
\end{align*}
where \eqref{nuevacota} yields the bound for the first term and the second term comes from \eqref{FacotadaCF}. The critical value of $\gamma$ yields
\[
t^{\frac{d}{2}\gamma}t^{1-\alpha}  F(t)\geq \frac{C_7}{2}t\ln\left(2-\frac{2}{t}\right)+\frac{C_5}{2}\int_0^t s^{\frac{d}{2}\gamma}F^\gamma(s) ds.
\]
Defining the r.h.s. of this expression as the new $f(t)$, $t>1$, we proceed as before but using
\[
f(t)\geq C_8 t\ln\left(2-\frac{2}{t}\right)
\]
and
\[
f'(t)\geq C_9 t^{\frac{d}{2}\gamma}F^\gamma(t)
\]
instead of \eqref{cotaf} and \eqref{cotaf'}, respectively, with $C_8=\frac{C_7}{2}$ and $C_9=\frac{C_5}{2}$. The resulting expression, instead of \eqref{exponentes}, is
\[
\frac{C_8^{1-\gamma}}{\gamma-1}t^{1-\gamma}\ln^{1-\gamma}\left(2-\frac{2}{t}\right)\geq C_9\int_t^T s^{-\frac{d}{2}\gamma(\gamma-1)-(1-\alpha)\gamma}ds
\]
or, in this case,
\[
\frac{C_8^{1-\gamma}}{\gamma-1}t^{1-\gamma}\ln^{1-\gamma}\left(2-\frac{2}{t}\right)\geq C_9\int_t^T s^{-\gamma}ds.
\]
This implies, as $T\rightarrow\infty$, that
\[
C_8^{1-\gamma}\ln^{1-\gamma}\left(2-\frac{2}{t}\right)\geq C_9,
\]
which is a contradiction whenever the initial condition is sufficiently large at the point $y_0$.

So far we note that in this proof we do not require that $u$ satisfies \eqref{general}. Hence, any positive mild solution $u$ can only be local under the assumptions of Theorem \ref{blowUp}. In this context, let 
\[
\widetilde{T}=\sup\left\{T>0: u\in C([0,T];L_p(\RR^d))\cap L_{\infty}((0,T)\times\RR^d)\text{ is a positive mild solution of }\eqref{general}\right\}. 
\]
Previous work implies that $\widetilde{T}<+\infty$. Suppose that $\lim_{t\rightarrow \widetilde{T}^-}\norm{u(t)}_\infty<+\infty$. Since $u_0\in L_\infty(\RR^d)$, it follows that there exists $M>0$ such that $\norm{u(t)}_\infty\leq M$ for all $t\in[0,\widetilde{T})$. We choose a sequence $t_n\rightarrow \widetilde{T}$ as $n\rightarrow \infty$, with $t_n<\widetilde{T}$ for all $n\in\mathbb{N}$. We suppose $\frac{1}{2}\widetilde{T}<t_m<t_n$ without loss of generality, with $n,m\geq N$ for some $N\in\mathbb{N}$. As in the proof of \cite[Theorem 3.1]{SV22}, we find that
\begin{align*}
\norm{u(t_n)-u(t_m)}_p\lesssim &(t_n-t_m)t_m^{-1}\norm{u_0}_p\\
&+M^{\gamma-1}\int_0^{t_m}\norm{Y(t_n-s)-Y(t_m-s)}_1\norm{u(s)}_p ds\\
&+M^{\gamma-1}\int_{t_m}^{t_n}\norm{Y(t_n-s)}_1\norm{u(s)}_p ds.
\end{align*}
On the other hand, for any $t\in[0,\widetilde{T})$ we see that
\[
\norm{u(t)}_p\leq\norm{u_0}_p+\frac{M^{\gamma-1}}{\Gamma(\alpha)}\int_0^{t}(t-s)^{\alpha-1}\norm{u(s)}_p ds
\]
and Gronwall's inequality (\cite[Corollary 2]{YGD07}) yields
\[
\norm{u(t)}_p\leq\norm{u_0}_p E_{\alpha,1}(M^{\gamma-1}t^\alpha),\quad 0\leq t<\widetilde{T}.
\]
This shows that $\norm{u(t)}_p\leq \norm{u_0}_p E_{\alpha,1}(M^{\gamma-1}\widetilde{T}^\alpha)=:K$ for all $t\in[0,\widetilde{T})$. Thus,
\begin{align*}
\norm{u(t_n)-u(t_m)}_p\lesssim &(t_n-t_m)t_m^{-1}\norm{u_0}_p\\
&+M^{\gamma-1}K\int_0^{t_m}\norm{Y(t_n-s)-Y(t_m-s)}_1 ds\\
&+M^{\gamma-1}K\int_{t_m}^{t_n}\norm{Y(t_n-s)}_1 ds.
\end{align*}
The integral over $[0,t_m]$ can be estimated, using \cite[Theorems 2.10 and 2.14]{SV22}, as follows:
\begin{align*}
&\int_0^{t_m}\norm{Y(t_n-s)-Y(t_m-s)}_1 ds=\int_0^{t_m}\norm{Y(t_n-t_m+s)-Y(s)}_1 ds\\
&\leq \int_0^{\infty}\norm{Y(t_n-t_m+s)-Y(s)}_1 ds\\
&= \int_0^{t_n-t_m}\norm{Y(t_n-t_m+s)-Y(s)}_1 ds+\int_{t_n-t_m}^{\infty}\norm{Y(t_n-t_m+s)-Y(s)}_1 ds\\
&\leq \int_0^{t_n-t_m}\norm{Y(t_n-t_m+s)}_1ds+\int_0^{t_n-t_m}\norm{Y(s)}_1 ds+\int_{t_n-t_m}^{\infty}\norm{Y(t_n-t_m+s)-Y(s)}_1 ds\\
&\lesssim \int_0^{t_n-t_m}(t_n-t_m+s)^{\alpha-1}ds+\int_0^{t_n-t_m}s^{\alpha-1}ds+\int_{t_n-t_m}^{\infty}(t_n-t_m)s^{\alpha-2} ds\\
&\lesssim(t_n-t_m)^\alpha.
\end{align*}
Consequently,
\[
\norm{u(t_n)-u(t_m)}_p\lesssim (t_n-t_m)\widetilde{T}^{-1}\norm{u_0}_p+M^{\gamma-1}K(t_n-t_m)^\alpha
\]
and thus $(u(t_n))_{n\in\mathbb{N}}$ represents a Cauchy sequence in $L_p(\RR^d)$. We define $u(\widetilde{T}):=\lim_{t\rightarrow\widetilde{T}^-}u(t)$. From \cite[Theorem 3.12]{Rud87} it follows that $\norm{u(\widetilde{T})}_\infty\leq M$ and that $u(\widetilde{T})\geq 0$. Next, as in the proof of \cite[Theorem 3.2]{ZS15}, we define the operator
\[
\mathcal{M}v(t):=Z(t)\star u_0+\int_0^{\widetilde{T}}Y(t-s)\star u^\gamma(s)ds+\int_{\widetilde{T}}^t Y(t-s)\star |v(s)|^{\gamma-1}v(s)ds
\]
on the Banach space 
\[
E_\tau=C([\widetilde{T},\widetilde{T}+\tau];L_p(\RR^d))\cap L_\infty([\widetilde{T},\widetilde{T}+\tau)\times\RR^d),
\]
with some $\tau>0$ and the norm
\[
\norm{v}_{E_\tau}=\sup_{t\in [\widetilde{T},\widetilde{T}+\tau]}\norm{v(t)}_p+\sup_{(t,x)\in[\widetilde{T},\widetilde{T}+\tau)\times\RR^d}\abs{v(t,x)}.
\]
It is straightforward to see that $\mathcal{M}:E_\tau\rightarrow E_\tau$ is well defined and that $\mathcal{M}v(\widetilde{T})=u(\widetilde{T})$. Besides, for $v,w\in E_\tau$ we have that 
\begin{align*}
|\mathcal{M}v(t,x)-\mathcal{M}w(t,x)|&\leq\norm{\mathcal{M}v(t)-\mathcal{M}w(t)}_\infty\\
&\leq\int_{\widetilde{T}}^t\norm{Y(t-s)}_1\norm{|v(s)|^{\gamma-1}v(s)-|w(s)|^{\gamma-1}w(s)}_\infty ds\\
&\lesssim (\norm{v}_{E_\tau}+\norm{w}_{E_\tau})^{\gamma-1}\int_{\widetilde{T}}^t(t-s)^{\alpha-1}\norm{v(s)-w(s)}_\infty ds\\
&\lesssim (\norm{v}_{E_\tau}+\norm{w}_{E_\tau})^{\gamma-1}\norm{v-w}_{E_\tau}(t-\widetilde{T})^\alpha,
\end{align*}
and hence
\[
\norm{\mathcal{M}v(t)-\mathcal{M}w(t)}_\infty\lesssim (\norm{v}_{E_\tau}+\norm{w}_{E_\tau})^{\gamma-1}\norm{v-w}_{E_\tau}\tau^\alpha,\quad t\in [\widetilde{T},\widetilde{T}+\tau).
\]
Similarly,
\[
\norm{\mathcal{M}v(t)-\mathcal{M}w(t)}_p\lesssim (\norm{v}_{E_\tau}+\norm{w}_{E_\tau})^{\gamma-1}\norm{v-w}_{E_\tau}\tau^\alpha,\quad t\in [\widetilde{T},\widetilde{T}+\tau].
\]
Therefore, there exists $C_{10}>0$ such that
\begin{equation}
\label{EstContraccion}
\norm{\mathcal{M}v-\mathcal{M}w}_{E_\tau}\leq C_{10}\tau^\alpha(\norm{v}_{E_\tau}+\norm{w}_{E_\tau})^{\gamma-1}\norm{v-w}_{E_\tau},\quad v,w\in E_\tau.
\end{equation}
We also find that
\begin{align*}
\left\|Z(t)\star u_0+\int_0^{\widetilde{T}}Y(t-s)\star u^\gamma(s)ds\right\|_\infty&\leq\norm{Z(t)\star u_0}_\infty+\int_0^{\widetilde{T}}\norm{Y(t-s)}_1\norm{u^\gamma(s)}_\infty ds\\
&\lesssim \norm{u_0}_\infty+M^{\gamma}\int_0^{\widetilde{T}}(t-s)^{\alpha-1}ds\\
&\lesssim \norm{u_0}_\infty+M^{\gamma}(t^{\alpha}-(t-\widetilde{T})^\alpha)\\
&\lesssim \norm{u_0}_\infty+M^{\gamma}\widetilde{T}^\alpha
\end{align*}
and that
\[ 
\left\|Z(t)\star u_0+\int_0^{\widetilde{T}}Y(t-s)\star u^\gamma(s)ds\right\|_p \lesssim \norm{u_0}_p+M^{\gamma-1}K\widetilde{T}^\alpha,
\]
that is, there exists $C_{11}>0$ such that
\begin{equation}
\label{EstFuncion}
\left\|Z(t)\star u_0+\int_0^{\widetilde{T}}Y(t-s)\star u^\gamma(s)ds\right\|_{E_\tau} \leq C_{11}\left(\norm{u_0}_\infty+\norm{u_0}_p+M^{\gamma-1}(M+K)\widetilde{T}^\alpha\right).
\end{equation}
Let $R=2C_{11}\left(\norm{u_0}_\infty+\norm{u_0}_p+M^{\gamma-1}(M+K)\widetilde{T}^\alpha\right)$. If we consider the closed ball
\[
B_{E_\tau}:=\{w\in E_\tau:\norm{w}_{E_\tau}\leq R \},
\]
then estimates \eqref{EstContraccion}, with $v=0$, and \eqref{EstFuncion} show that $\mathcal{M}: B_{E_\tau}\rightarrow B_{E_\tau}$ is a contraction whenever $\tau$ is small enough (see \cite[Theorem 3.1]{SV22}), thus showing that $\mathcal{M}$ has a unique fixed point $w'\in B_{E_\tau}$. Moreover, since $u\geq 0$ we obtain that $w'\geq 0$ in $[\widetilde{T},\widetilde{T}+\tau)\times\RR^d$ following the same arguments as in the proof of Theorem \ref{positividad}, but one must now use the fact that 
\[
v_n(t)=Z(t)\star \left(u_0+\frac{1}{n}\right)+\int_0^{\widetilde{T}}Y(t-s)\star \left(u+\frac{1}{n}\right)^\gamma(s)ds+\int_{\widetilde{T}}^t Y(t-s)\star g_n(v_n(s))ds,
\]
for all $n\in\mathbb{N}$ and $v_n\in L_\infty (\widetilde{T},\widetilde{T}+\tau)\times\RR^d$. However, this leads to a contradiction with the definition of $\widetilde{T}$, and therefore $\lim_{t\rightarrow \widetilde{T}^-}\norm{u(t)}_\infty =+\infty$.

\begin{flushright}
$\square$
\end{flushright}

The final result of this section deals with the case $\gamma > 1+\frac{\beta}{d}$. For this purpose, as in Section \ref{sec:3}, we set $$\kappa=\begin{cases}\frac{d}{\beta}, & d>\beta,\\ 1, & otherwise \end{cases}.$$ 
We also define $H_2^\beta(\RR^d):=\overline{C_0^\infty(\RR^d)}^{\norm{\cdot}_{\Psi_{\beta},L_2}}$, with the closure being respect to the graph norm $\norm{\cdot}_{\Psi_{\beta},L_2}^2=\norm{\cdot}_{2}^2 + \norm{\Psi_{\beta}(-i\nabla)(\cdot)}_{2}^2$.

\begin{theorem}
\label{SolucionGlobal}
Let $\alpha\in (0,1)$ and $\beta\in (0,2)$. Assume the hypothesis $(\mathcal{H}_1)$ holds. Suppose that $\gamma >1+ \frac{\beta}{d}$, that $\max\left(1,\kappa,\frac{d(\gamma-1)}{\beta}\right)<p<\infty$ and that $1=p'<\frac{d}{\beta}(\gamma-1)$ whenever $d<\beta$, or $\frac{d}{\beta}<p'<\frac{d}{\beta}(\gamma-1)$ whenever $d\geq\beta$. If $u_0\in L_1(\RR^d) \cap H_2^\beta(\RR^d) \cap L_\infty(\RR^d)$ is sufficiently small and non-negative, then there exists a global solution $u$ to \eqref{general} in the sense of Definition \ref{defSolution} and the optimal time decay estimate
\[
\norm{u(t)}_1+t^{\frac{\alpha d}{\beta}\left(\frac{1}{p'}-\frac{1}{p}\right)}\norm{u(t)}_p+t^{\frac{\alpha d}{\beta p'}}\norm{u(t)}_\infty\lesssim\left(\norm{u_0}_1+\norm{u_0}_p +\norm{u_0}_\infty\right) 
\]  
is true for all $t\geq 1$.
\end{theorem}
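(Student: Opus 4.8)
The plan is to construct $u$ directly as a fixed point of the integral operator
\[
(\mathcal{N}u)(t)=Z(t)\star u_0+\int_0^t Y(t-s)\star\bigl(\abs{u(s)}^{\gamma-1}u(s)\bigr)\,ds
\]
in a Banach space built from the three weighted quantities appearing in the asserted estimate, and then to check that the fixed point is a solution in the sense of Definition \ref{defSolution}. First I would record the smoothing estimates for the solution operators. From the two-sided bounds of Propositions \ref{cotasZ}--\ref{cotasY} (equivalently, from \cite[Theorems 2.10 and 2.14]{SV22}) and the self-similar structure of $Z$ and $Y$, one obtains, for exponents $1\le m\le r\le\infty$ lying in the ranges for which the $L_q$-norm of the corresponding profile is finite ($\tfrac1q=1-\tfrac1m+\tfrac1r$),
\[
\norm{Z(t)\star f}_r\lesssim t^{-\frac{\alpha d}{\beta}\left(\frac1m-\frac1r\right)}\norm{f}_m,\qquad
\norm{Y(t)\star f}_r\lesssim t^{\alpha-1-\frac{\alpha d}{\beta}\left(\frac1m-\frac1r\right)}\norm{f}_m,
\]
together with $\norm{Z(t)\star f}_1\le\norm{f}_1$ and $\norm{Y(t)\star f}_1\le g_\alpha(t)\norm{f}_1$, the last two because $Z(t,\cdot)$ is a probability density and $\frac1{g_\alpha(t)}\norm{Y(t,\cdot)}_1=1$ by \cite[Theorem 2.14]{SV22}. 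It is precisely the admissibility of $m,r$ here that consumes the restrictions on $p$, $p'$ and $\kappa$.

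Next I would work in the space $X$ of $u\in C((0,\infty);L_1\cap L_p\cap L_\infty)$ with $\norm{u}_X<\infty$, where
\[
\norm{u}_X:=\sup_{t>0}\left(\norm{u(t)}_1+t^{\frac{\alpha d}{\beta}\left(\frac1{p'}-\frac1p\right)}\norm{u(t)}_p+t^{\frac{\alpha d}{\beta p'}}\norm{u(t)}_\infty\right),
\]
and with $\mathcal{N}$ as above. For the linear part, the smoothing estimates with $m=p'$ and the embedding $L_1\cap L_\infty\hookrightarrow L_{p'}$ (plus the trivial bounds $\norm{Z(t)\star u_0}_1\le\norm{u_0}_1$, $\norm{Z(t)\star u_0}_\infty\le\norm{u_0}_\infty$ near $t=0$) give $\norm{Z(\cdot)\star u_0}_X\lesssim\norm{u_0}_1+\norm{u_0}_p+\norm{u_0}_\infty$. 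For the nonlinear part, fix an output exponent $r\in\{1,p,\infty\}$, write $\norm{\abs{u(s)}^\gamma}_m=\norm{u(s)}_{m\gamma}^\gamma$, interpolate $\norm{u(s)}_{m\gamma}$ among the three norms controlled by $\norm{u}_X$, apply the $Y$-smoothing bound with a well-chosen intermediate $m$, and reduce to a Beta-type time integral $\int_0^t(t-s)^{\alpha-1-\frac{\alpha d}{\beta}(\frac1m-\frac1r)}s^{-\vartheta_r}\,ds$. The exponents are arranged so that the $(t-s)$-power exceeds $-1$ (this is the source of the lower bounds $p>\kappa$, etc.), the $s$-power $-\vartheta_r$ exceeds $-1$ (using $p'>\tfrac{\alpha d}{\beta}$, which holds since $p'\ge\max(1,\tfrac d\beta)>\tfrac{\alpha d}{\beta}$), and --- the decisive point --- after multiplying by the relevant weight the surviving power of $t$ is $t^{\alpha-\frac{\alpha d(\gamma-1)}{\beta p'}}$, which is bounded on all of $(0,\infty)$ exactly when $p'\le\tfrac d\beta(\gamma-1)$; moreover the admissible window for $p'$ is nonempty exactly when $\tfrac d\beta(\gamma-1)>1$, i.e.\ $\gamma>1+\tfrac\beta d$. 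This gives $\norm{\mathcal{N}u}_X\le C(\norm{u_0}_1+\norm{u_0}_p+\norm{u_0}_\infty)+C\norm{u}_X^\gamma$, and the same computation run on $\abs{u}^{\gamma-1}u-\abs{v}^{\gamma-1}v\lesssim(\abs{u}^{\gamma-1}+\abs{v}^{\gamma-1})\abs{u-v}$ yields $\norm{\mathcal{N}u-\mathcal{N}v}_X\lesssim(\norm{u}_X^{\gamma-1}+\norm{v}_X^{\gamma-1})\norm{u-v}_X$. Hence, for $u_0$ small enough, $\mathcal{N}$ maps a small closed ball of $X$ into itself and is a contraction there, producing a unique fixed point $u\in X$; restricting $\norm{u}_X<\infty$ to $t\ge1$ is the stated decay, and the matching lower parts of the two-sided bounds in Propositions \ref{cotasZ}--\ref{cotasY} show the rates are optimal.

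It then remains to verify that this $u$ is a solution in the sense of Definition \ref{defSolution}: that $u\in C([0,T];L_p)$ with $u(0)=u_0$ (with Lemma \ref{IdentidadAproximada} handling the limit $t\to0^+$) and $u\in L_\infty((0,T)\times\RR^d)$ for every $T>0$ follow from the construction, the smoothing estimates and $u_0\in L_\infty$; that $u$ actually satisfies \eqref{general} follows from the integral representation of Section \ref{sec:2}, since $u_0\in H_2^\beta(\RR^d)\cap L_\infty(\RR^d)$ and $u\in L_\infty$ put us in the setting of Theorem \ref{localrepresentation} and of the local theory of \cite[Section 5]{SV22}; and $u\ge0$ follows from $u_0\ge0$ by the monotone-iteration argument used in the proof of Theorem \ref{positividad}. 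The main obstacle is the bookkeeping in the nonlinear estimate: one must simultaneously choose the interpolation weights and the intermediate index $m$ so that Young's inequality is legitimate inside the finiteness range of $\norm{Y(t,\cdot)}_m$ (the spatial-singularity constraints), so that the time integral converges both as $s\to0$ and as $s\to t$, and so that the emergent power of $t$ does not beat the target weight --- three nearly saturated requirements, and it is their joint fulfilment that dictates the precise borderline hypotheses on $\gamma$, $p$ and $p'$.
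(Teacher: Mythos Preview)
Your overall architecture matches the paper's: a contraction in a space weighted by the three target norms, followed by identification of the fixed point with a strong solution of the Volterra equation via the machinery of \cite[Section 5]{SV22}. The difference is in the weights, and it is not innocuous. The paper works in
\[
\norm{v}_E=\sup_{t\ge0}\Bigl(\seq{t}^{\frac{\alpha d}{\beta}(\frac1{p'}-\frac1p)}\norm{v(t)}_p+\norm{v(t)}_1\Bigr)+\sup_{t>0}\{t\}^{\frac{\alpha d}{\beta p}}\seq{t}^{\frac{\alpha d}{\beta p'}}\norm{v(t)}_\infty,
\]
with $\seq{t}=\sqrt{1+t^2}$ and $\{t\}=t/\seq{t}$. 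The point of the brackets is that near $s=0$ the $L_\infty$ piece carries the weight $s^{\alpha d/(\beta p)}$, not $s^{\alpha d/(\beta p')}$; thus a function in the ball satisfies $\norm{v(s)}_\infty\lesssim s^{-\alpha d/(\beta p)}$ for small $s$, and the short-time portion of the Beta integral in the nonlinear estimate picks up only $s^{-\alpha d(\gamma-1)/(\beta p)}$, which is integrable precisely because $p>\tfrac{d(\gamma-1)}{\beta}$.

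With your pure-power norm the short-time $L_\infty$ control is the stronger singularity $s^{-\alpha d/(\beta p')}$, and after interpolation the $s$-power $\vartheta_r$ you must integrate near $0$ is at best of order $\tfrac{\alpha d(\gamma-1)}{\beta p'}$, not $\tfrac{\alpha d}{\beta p'}$. Your claim that $\vartheta_r<1$ follows from $p'>\tfrac{\alpha d}{\beta}$ drops the factor $\gamma-1$; the actual requirement would be $p'>\tfrac{\alpha d(\gamma-1)}{\beta}$, which is \emph{not} implied by the hypotheses (take $\alpha$ close to $1$, $\gamma>2$, and $p'$ near $\tfrac d\beta$). So as written the contraction estimate does not close at short time across the full parameter range. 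The fix is exactly the paper's: split short and long time via bracket weights so that near $t=0$ one uses $L_p\to L_\infty$ smoothing (exponent governed by $p$) and near $t=\infty$ one uses $L_{p'}\to L_\infty$ smoothing (exponent governed by $p'$). Once you make that adjustment, the rest of your outline---including the appeal to \cite[Section 5]{SV22} for the passage from mild to strong solution---coincides with the paper's argument.
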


\begin{remark}
Whenever $d\leq\beta$, the existence of parameter $p'$ follows from the fact that $\gamma >1+ \frac{\beta}{d}$. However, in the case $d>\beta$ one can not generally guarantee the existence of $p'$. 
\end{remark}

\begin{proof}
We consider the Banach space
\[
E:=C([0,\infty); L_p(\RR^d) \cap L_1(\RR^d)) \cap L_\infty((0,\infty); L_{\infty}(\RR^d)),
\]
with the norm
	\[
	\left\|v\right\|_{E}:=\sup_{t\geq 0}\left(\seq{t}^{\frac{\alpha d}{\beta}\left(\frac{1}{p'}-\frac{1}{p}\right)}\left\|v(t,\cdot)\right\|_p + \left\|v(t,\cdot)\right\|_1\right) + \sup_{t>0}\{t\}^{\frac{\alpha d}{\beta p}}\seq{t}^{\frac{\alpha d}{\beta p'}}\left\|v(t,\cdot)\right\|_{\infty},
	\]
where $\seq{t}:=\sqrt{1+t^2}$ and $\{t\}:=\dfrac{t}{\sqrt{1+t^2}}$. We define on $E$ the operator
\[
\mathcal{M}(v)(t,x):=\int_{\RR^d} Z(t,x-y)u_0(y)dy + \int_0^t\int_{\RR^d} Y(t-s,x-y)|v(s,y)|^{\gamma-1}v(s,y)dyds
\]
and similar arguments as in \cite[Sections 3 and 4]{SV22} show that
\[
\mathcal{M}(v)\in C([0,\infty); L_p(\RR^d) \cap L_1(\RR^d))
\]
and that
$$\left\|Z(t,\cdot)\star u_0\right\|_{\infty}\leq\left\|Z(t,\cdot)\right\|_1\left\|u_0\right\|_\infty=\norm{u_0}_\infty,\quad t>0.$$
For $0<t\leq 1$ we have that
\begin{align*}
\left\|\displaystyle\int_0^t Y(t-s,\cdot)\star|v(s,\cdot)|^{\gamma-1}v(s,\cdot)ds\right\|_{\infty}
&\leq \displaystyle\int_0^{t} \norm{Y(t-s,\cdot)}_{1}\norm{|v(s,\cdot)|^{\gamma-1}v(s,\cdot)}_\infty ds\\
&\lesssim\sup_{(t,x)\in [0,1]\times\RR^d}\abs{v(t,x)}^{\gamma}\int_0^t (t-s)^{\alpha-1}ds\\
&\lesssim\sup_{(t,x)\in [0,1]\times\RR^d}\abs{v(t,x)}^{\gamma}
\end{align*}
and for $t>1$ we obtain (see \cite[Section 4]{SV22})
\begin{align*}
&\left\|\displaystyle\int_0^t Y(t-s,\cdot)\star|v(s,\cdot)|^{\gamma-1}v(s,\cdot)ds\right\|_\infty\\
&\leq\displaystyle\int_0^{t} \norm{Y(t-s,\cdot)}_{\frac{p'}{p'-1}}\norm{|v(s,\cdot)|^{\gamma-1}v(s,\cdot)}_{p'} ds\\
&\lesssim \norm{v}_E^\gamma\displaystyle\int_0^{t}(t-s)^{-\frac{\alpha d}{\beta p'}+\alpha-1}s^{-\frac{\alpha d}{\beta p}(\gamma-1)}\langle s\rangle^{-\frac{\alpha d}{\beta}(\gamma-1)\left(\frac{1}{p'}-\frac{1}{p}\right)}ds\\
&\lesssim \norm{v}_E^\gamma t^{-\frac{\alpha d}{\beta p'}}\\
&\leq \norm{v}_E^\gamma.
\end{align*}
This proves that
\[
\mathcal{M}(v)\in L_\infty((0,\infty); L_\infty(\RR^d)).
\]
Besides, as in \cite[Section 4]{SV22} one finds that
$$\norm{Z\star u_0}_{E}\leq C_1\left(\norm{u_0}_1+\norm{u_0}_p +\norm{u_0}_\infty\right)$$
and that the operator $\mathcal{M}$ is a contraction in the closed ball $B_R=\{v\in E:\norm{v}_E\leq R\}$ of radius $R=2C_1\left(\norm{u_0}_1+\norm{u_0}_p +\norm{u_0}_\infty\right)$. Consequently there exists a fixed point $\widetilde{u}$ which is unique in $E$ because of Gronwall's inequality (\cite[Corollary 2]{YGD07}). 

Let $T>0$. We define the Volterra equation
$$u(t)=u_0+g_{\alpha}*|\widetilde{u}|^{\gamma-1}\widetilde{u}(t)+g_{\alpha}*Au(t), \quad 0\leq t\leq T,$$
and by proceeding as in \cite[Section 5]{SV22}, since $u_0\in H_2^\beta(\RR^d)$, we find that there exists a unique strong solution $u\in L_2([0,T];H_2^\beta(\RR^d))$, and it satisfies the variation of parameters formula
\[
u(t)=\frac{d}{dt}\int_0^t S(s)\left(u_0+g_{\alpha}*|\widetilde{u}|^{\gamma-1}\widetilde{u}\right)(t-s) ds.
\]   
On the other hand, similar arguments as in \cite[Lemma 5.1]{SV22} show that the fixed point $\widetilde{u}$ satisfies
\[
\widetilde{u}(t)=\frac{d}{dt}\int_0^t S(s)\left(u_0+g_{\alpha}*|\widetilde{u}|^{\gamma-1}\widetilde{u}\right)(t-s) ds
\]   
and therefore $\widetilde{u}=u$. This holds for any $T>0$ which implies that $u$ is global.
\end{proof}

\begin{remark}
Since $u_0\in L_\infty(\RR^d)$, Theorem \ref{positividad} guarantees the positivity of the global solution $u$ on $[0,T)$ for some $T>0$. 
\end{remark}

\section*{Acknowledgements}
The authors are very grateful to the referees for their valuable comments and suggestions, which helped to improve the quality of the paper significantly.

\section*{Funding}
The authors were partially supported by Chilean research grant Fondo Nacional de Desarrollo Científico y Tecnológico, FONDECYT 1190255.

\section*{Conflict of interest}
The authors declare that they have no conflict of interest.

\section*{Availability of data and material}
Not applicable. No datasets were generated or analysed during the current study.

\section*{Code availability}
Not applicable.

\end{document}